\theoremstyle{plain}
\numberwithin{equation}{section}
\newtheorem{thm}{Theorem}[section]
\newtheorem{prop}[thm]{Proposition}
\newtheorem{slem}[thm]{Sublemma}
\newtheorem{lem}[thm]{Lemma}
\newtheorem{conj}[thm]{Conjecture}
\theoremstyle{definition}
\newtheorem{df}[thm]{Definition}
\newtheorem{ex}[thm]{Example}
\newtheorem{rmk}[thm]{Remark}
\newcommand{\mb}{\mathbb}
\newcommand{\mf}{\mathfrak}
\newcommand{\ml}{\mathcal}
\begin{document}
\author{Rin Sugiyama\footnote{rin-sugiyama@math.nagoya-u.ac.jp}\\ \textit{\small Graduate School of Mathematics, Nagoya University,}\\ \textit{\small Furo-cho, Chikusa-ku, Nagoya 464-8602, Japan}}
\title{On the kernel of the reciprocity map of simple normal crossing varieties over finite fields}
\date{}
\maketitle
\begin{abstract}

In this paper, we study the kernel of the reciprocity map of certain simple normal crossing varieties over a finite field and give a example of a simple normal crossing surface whose reciprocity map is not injective for any finite scalar extension.
\end{abstract}

\section{Introduction}
The reciprocity map of the unramified class field theory for a proper variety $X$ over a finite field $k$ is a homomorphism of the following form:
\begin{align*}
\rho _X : CH_0(X) \longrightarrow \pi _1^{ab}(X),
\end{align*}
which sends the class of a closed point $x$ to the Frobenius substitution at $x$. Here $CH_0(X)$ is the Chow group of 0-cycles on $X$ modulo rational equivalence, and $\pi_1^{ab}(X)$ is the abelian $\acute{\text{e}}$tale fundamental group of $X$. If $X$ is normal, $\rho _X$ has dense image \cite{L}. If $X$ is smooth, $\rho _X$ is injective \cite{KS}. We also know that there is a projective normal surface $X$ for which $\rho _X$ is not injective \cite{MSA}, and that there is a simple normal crossing surface $X$ over $k$ for which $\rho _X/n$ is not injective but $\rho _{X\otimes E}/n$ is injective for any sufficiently large finite extension $E/k$ and some $n>1$ \cite{Sat}. Here a normal crossing variety $X$ over $k$ is a separated scheme of finite type over $k$ which is everywhere $\acute{\text{e}}$tale locally isomorphic to
\begin{align*}
\mathrm{Spec}\bigl(k[T_0,\cdots, T_d]/(T_0T_1\cdots T_r)\bigr) \ \ \ (0 \leq r\leq d=\dim X).
\end{align*}
A normal crossing variety $X$ over $k$ is called simple if any irreducible component of $X$ is smooth over $k$. For any simple normal crossing variety $X$, we have an exact sequence (cf. Section 1.3)
\begin{align*}
\xymatrix{
H_2(\Gamma _X, \mb{Z}/n)\ar[r]^{\epsilon _{X,n}} &CH_0(X)/n \ar[r]^{\rho_ X/n}&\pi_1^{ab}(X)/n,
}
\end{align*}
where $\Gamma _X$ is the dual graph of $X$ which is a finite simplicial complex (cf. Section 1.1). Hence, by studying on the map $\epsilon _{X,n}$, we get an information about the injectivity of $\rho_X/n$. However $\epsilon _{X,n}$ is abstract and difficult to compute directly. In this paper, for a certain simple normal crossing variety $X$ over $k$, we investigate $\epsilon _{X,n}$ and the kernel of the reciprocity map $\rho _X$ by using the $\acute{\text{e}}$tale homology theory and the cohomological Hasse principle. We also construct a simple normal crossing surface $Y$ for which the map
\begin{align*}
\rho _{Y\otimes F}/n : CH_0(Y\otimes _kF)/n\rightarrow \pi_1^{ab}(Y\otimes _kF)/n
\end{align*}
is not injective for any finite extension $F/k$ and some $n>1$. 

Let $Y_0$ a projective smooth and geometrically irreducible variety over $k$ and $D$ be a simple normal crossing divisor on $Y_0$. We put $O:=(0:1), \infty :=(1:0) \in \mb{P}^1_k$. We then consider the following simple normal crossing variety:
\begin{align*}
Y := \bigl( Y_0\times _kO \bigr) \cup \bigl( Y_0\times _k\infty \bigr) \cup \bigl( D\times _k\mb{P}^1\bigr) \ \ \subset Y_0\times _k\mb{P}^1.
\end{align*}

The following map plays an important role on the kernel of the reciprocity map $\rho _Y$ of $Y$:
\begin{align*}
\delta _Y : H_1(\Gamma _D ,\mb{Z}) \longrightarrow CH_0(Y).
\end{align*}
We will construct the map $\delta _Y$ in Section 2.1 and prove the image coincides with the kernel of the reciprocity map $\rho _Y$. We consider the norm map $\sigma : H_1(\Gamma _{\overline{D}},\mb{Z})\rightarrow H_1(\Gamma _{D},\mb{Z})$, and put $G(Y)$ the image of the composite map $\delta _Y \circ \sigma $. The group $G(Y)$ relates to the following group and map (cf. Section 2.2)
\begin{align*}
&\Theta _{\ell }:=\mathrm{Coker} \bigl( \bigoplus _j\pi _1^{ab}(\overline{D}_j)^{pro-\ell } \longrightarrow \pi _1^{ab}(\overline{Y_0})^{pro-\ell } \bigr),\\
&\alpha ^{(\ell )} : H_1(\Gamma _{\overline {D}} ,\mb{Z}_{\ell }) \longrightarrow \Theta _{\ell }.
\end{align*}
Here $\overline{Y_0}:=Y_0\otimes _kk^{sep}$ and $\overline{D}_j$ denotes irreducible component of $\overline{D}:=D\otimes _kk^{sep}$, and $\pi_1^{ab}(-)^{pro-\ell}$ denotes the maximal $\ell $-quotient of $\pi_1^{ab}(-)$.

The main result of this paper is the following theorem:
\begin{thm}[Theorem \ref{MT}]\label{IT}
Let $\ell $ be an arbitrary prime number.\\
\textup{(1)} The $\ell $-primary part $G(Y)\{\ell \}$ of $G(Y)$ is a subquotient of $\big(\Theta _{\ell }\bigr)_{tors}$.\\
\textup{(2)} Assume that

\ \ \ \textup{(i)} each connected components of $Y^{(2)}$ has a $k$-rational point,

\ \ \ \textup{(ii)} $G_k$ acts on $\big(\Theta _{\ell }\bigr)_{tors}$ trivially. 

Then $G(Y)\{\ell \}$ is isomorphic to the image of the map $\alpha ^{(\ell )}$.
\end{thm}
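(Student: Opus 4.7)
The strategy is to construct a natural comparison homomorphism from $G(Y)\{\ell\}$ to $\Theta_\ell$ whose image coincides with that of $\alpha^{(\ell)}$, and to analyze this map by combining a Mayer--Vietoris decomposition of $Y=(Y_0\times_k O)\cup(Y_0\times_k\infty)\cup(D\times_k\mb{P}^1)$ with the $\acute{\text{e}}$tale homology theory of $\bar Y$ and Katz--Saito reciprocity for the smooth proper variety $\bar Y_0$.

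\textbf{Construction of the comparison map.}
Since $\pi_1^{ab}$ of a simple normal crossing variety fits into an exact sequence expressing it through the $\pi_1^{ab}$'s of its irreducible components and the (co)homology of the dual graph, I would first analyze $\pi_1^{ab}(\bar Y)^{pro-\ell}$ via the above decomposition. This should yield, at the pro-$\ell$ level, a natural map from the quotient of $\pi_1^{ab}(\bar Y_0)^{pro-\ell}$ by divisor contributions---namely $\Theta_\ell$---to a specific direct summand of $\pi_1^{ab}(\bar Y)^{pro-\ell}$ picked out by the decomposition. Pulling back via the reciprocity map $\rho_{\bar Y}$ and restricting to classes coming from $\delta_Y\circ\sigma$ produces the desired homomorphism $G(Y)\{\ell\}\to\Theta_\ell$. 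Compatibility with $\alpha^{(\ell)}$ follows by naturality of this construction and of $\delta_Y$ from Section 2.1.

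\textbf{Part (1).}
The image of $\alpha^{(\ell)}$ lies automatically in $(\Theta_\ell)_{tors}$: the source $H_1(\Gamma_{\bar D},\mb{Z}_\ell)$ is a finitely generated free $\mb{Z}_\ell$-module of rank equal to the cycle rank of $\Gamma_{\bar D}$, while the torsion-free quotient of $\Theta_\ell$ reflects $\ell$-adic characters of the geometric fundamental group of $\bar Y_0$ of nonzero weight (by the Weil conjectures applied to $H^1(\bar Y_0)$) which cannot arise from purely combinatorial graph data. Once the comparison map is set up so that it factors $\delta_Y\circ\sigma$ through $\alpha^{(\ell)}$ on $\ell$-primary parts, $G(Y)\{\ell\}$ appears as a quotient of a subgroup of $(\Theta_\ell)_{tors}$, which gives the subquotient statement.

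\textbf{Part (2) and main obstacle.}
Assumption (i), the existence of $k$-rational points on each connected component of $Y^{(2)}$, enables a clean Galois descent: the norm map $\sigma$ becomes surjective on the relevant $\ell$-primary pieces, so no classes are lost passing from $\bar D$ to $D$. Assumption (ii), triviality of the $G_k$-action on $(\Theta_\ell)_{tors}$, forces the image of $\alpha^{(\ell)}$ to descend to a well-defined subgroup of $\Theta_\ell^{G_k}$, which is the correct target for the comparison map arising over $k$. The main obstacle I anticipate is the \emph{injectivity} of the comparison map $G(Y)\{\ell\}\to\operatorname{image}(\alpha^{(\ell)})$: ruling out that distinct classes of $0$-cycles coming from $\delta_Y\circ\sigma$ collapse in $\Theta_\ell$ requires a precise analysis of rational equivalences in $CH_0(Y)$. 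I expect this to be achieved by combining the cohomological Hasse principle for $\bar Y_0$ together with Katz--Saito injectivity for $\rho_{\bar Y_0}$, which together translate rational equivalence among the relevant $0$-cycles into the vanishing condition cut out by $\ker(\alpha^{(\ell)})$.
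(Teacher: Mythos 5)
You have correctly identified the paper's overall strategy: factor $(\delta_Y\circ\sigma)^{(\ell)}$ through $\alpha^{(\ell)}$, invoke a weight argument to place $\mathrm{Im}(\alpha^{(\ell)})$ inside $(\Theta_\ell)_{tors}$ (this is Lemma \ref{MSAT}(1) and gives part (1) once the factorization is established), and reduce part (2) to an injectivity statement. The gap is that the injectivity you defer as ``the main obstacle'' is the entire content of part (2), and the tools you name would not close it. The paper proves it by making the remaining arrows explicit: $(\delta_Y\circ\sigma)^{(\ell)}$ factors as $\alpha^{(\ell)}$ followed by $\Theta_\ell\to(\Theta_\ell)_{G_k}\xrightarrow{\ \eta^{(\ell)}\ }A_0(Y)^{(\ell)}$, where $(\Theta_\ell)_{G_k}\simeq\mathrm{Coker}\bigl(A_0(D^{(1)})^{(\ell)}\to A_0(Y_0)^{(\ell)}\bigr)$ by Kato--Saito applied to the smooth projective varieties $D^{(1)}$ and $Y_0$ \emph{over $k$} (your ``Kato--Saito injectivity for $\rho_{\overline{Y_0}}$'' does not parse: over $k^{sep}$ the group $A_0$ is divisible and unramified class field theory is not the relevant tool). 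Injectivity on $\mathrm{Im}(\alpha^{(\ell)})$ is then assembled from three facts: (a) $(\Theta_\ell)_{tors}\to(\Theta_\ell)_{G_k}$ is injective (Lemma \ref{MSAT}(2)); (b) under hypothesis (i), $\eta^{(\ell)}$ coincides with $\psi^{(\ell)}$; and (c) $\psi:\mathrm{Coker}\bigl(A_0(D)\to A_0(Y_0)\bigr)\hookrightarrow A_0(Y)$ is injective (Lemma \ref{L8}), which is proved by a diagram chase using the anti-diagonal $a\mapsto(a,-a)$ in the Mayer--Vietoris sequence $A_0(D)\to A_0(Y_0)^{\oplus2}\to A_0(Y)$. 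Step (c) is entirely absent from your proposal, and without it nothing rules out that classes surviving in $\Theta_\ell$ collapse under rational equivalence in $CH_0(Y)$; neither the cohomological Hasse principle (which enters only in constructing $\delta_Y$ and the sequence of Proposition \ref{P2-2}) nor Kato--Saito alone supplies it.

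You have also misread both hypotheses. Hypothesis (i) asks for $k$-rational points on the connected components of $Y^{(2)}$, hence of $D^{(2)}$; its role is to make $\bigoplus_j A_0(D_j)\to A_0(D)$ surjective (Lemma \ref{L1}(2)), so that $\mathrm{Coker}\bigl(A_0(D^{(1)})\to A_0(Y_0)\bigr)\to\mathrm{Coker}\bigl(A_0(D)\to A_0(Y_0)\bigr)$ is an isomorphism and $\eta^{(\ell)}=\psi^{(\ell)}$. It has nothing to do with surjectivity of the norm map $\sigma$, which is never at issue since $G(Y)$ is \emph{defined} as the image of $\delta_Y\circ\sigma$. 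Hypothesis (ii) is not about descending $\mathrm{Im}(\alpha^{(\ell)})$ into the invariants $\Theta_\ell^{G_k}$; it is the input to Lemma \ref{MSAT}(2), which concerns the \emph{coinvariants} $(\Theta_\ell)_{G_k}$ --- the group that actually maps onward to $A_0(Y)$, because $\pi_1^{geo}$ is computed as coinvariants (Lemma \ref{L3}). Even with trivial action on the torsion, one must still rule out that a torsion class becomes a Frobenius boundary of an element of the free part, which again requires the weight argument; your proposal does not address this.
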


In their paper \cite{MSA}, Matsumi, Sato and Asakura proved a similar assertion of Theorem \ref{IT} for projective normal surfaces over finite fields. Theorem \ref{IT} shows an analogy of their result for certain simple normal crossing varieties over finite fields.

The remarkable point in Theorem \ref{IT} is the map $\alpha ^{(\ell )}$ does not vary for finite scalar extensions and the group $\mathrm{Im}(\alpha ^{(\ell )})$ relates with $G(Y)$. Therefore we will get a information about the potential injectivity of the reciprocity map $\rho _Y$ by studying on $G(Y)$ and using the map $\alpha ^{(\ell )}$. If the assumptions of Theorem \ref{IT}(2) is satisfied and $\mathrm{Im}(\alpha ^{(\ell) })$ is not trivial, then for any finite extension $F/k$ the map $\rho _{Y\otimes F}$ is not injective, i.e. potentially not injective. We will give such a surface in Section 3.1.\bigskip

This paper is organized as follows : In Section 1, we prepare some lemmas and theorems to prove the main result, and recall a cohomological Hasse principle and $\acute{\text{e}}$tale homology theory. In Section 2, we construct the map $\delta _Y$ and prove Theorem \ref{IT}. In the last of this paper, we give a simple normal crossing surface over a finite field whose reciprocity map $\rho _Y$ is potentially not injective. 
\subsection{notation}

(0.1) For an abelian group $A$ and a positive integer $n$, $A/n$ denotes the cokernel of the map $A\stackrel{\times n}{\longrightarrow }A$. $A_{tors}$ denotes the torsion subgroup of $A$. $A^{\oplus n}$ denotes the direct sum of $n$ copies of $A$. 

(0.2) For a field $k$, $k^{\times }$ denotes the multiplicative group, $k^{sep}$ denotes a fixed separable closure, $G_k$ denotes the absolute Galois group $\mathrm{Gal}(k^{sep}/k)$, $G_k^{ab}$ denotes the maximal abelian quotient group of $G_k$. For a connected scheme $X$, $\pi _1^{ab}(X)$ denotes the abelian $\acute{\text{e}}$tale fundamental group. Further, for a non-connected scheme $V$, $\pi_1^{ab}(V)$ denotes $\bigoplus_i \pi_1^{ab}(V_i)$ where $V_i$ are connected components of $V$. For $k$-scheme $X$, $\pi _1^{geo}(X)$ denotes $\mathrm{Ker}\bigl( \pi _1^{ab}(X)\longrightarrow G_k^{ab} \bigr)$. 

(0.3) Let $k$ be a field and $X$ be a $k$-scheme. For a field extension $F/k$, $X\otimes _kF$ denotes $X\times _{\mathrm{Spec}(k)}\mathrm{Spec}(F)$. Especially, for a fixed separable closure $k^{sep}/k$, $\overline{X}$ denotes $X\times _{\mathrm{Spec}(k)}\mathrm{Spec}(k^{sep})$. 

(0.4) For a scheme $X$ and an integer $q\geq 0$, $X^q$ denotes the set of points on $X$ of codimension $q$. If $X$ is of finite type over a field, $X_q$ denotes the set of points on $X$ which $\dim ( \overline{\{ x \}})=q$. Put $d:=\dim X$, $X_q=X^{d-q}$. For a point $x \in X$, $\kappa (x)$ denotes the residue field. For an integral scheme $X$, $k(X)$ denotes the function field. For a scheme $X$ of finite type over a field $k$ and of pure dimension $d$, we define the following group:
\begin{align*}
CH_0(X) : = \mathrm{Coker} \Bigl( \partial _1 : \bigoplus_{x \in X^{d-1}} \kappa (x)^{\times } \longrightarrow \bigoplus_{x \in X^d} \mb{Z} \Bigr),
\end{align*}
where $\partial _1$ is defined by the discrete valuation. 

If $X$ is proper over $k$, there is the degree map
\begin{align*}
\xymatrix{
CH_0(X) \ar[r]^{ \ \ \mathrm{deg}} &\mb{Z},
}
\end{align*}
$A_0(X)$ denotes its kernel. 

(0.5) $H^r(-,-)$ denotes an $\acute{\text{e}}$tale cohomology group. Especially, $H^r(F,-)$ denotes $H^r(\mathrm{Spec}(F), -)$ for a field $F$.

Let $X$ be a scheme. For an integer $n>1$ invertible on $X$, we denote $\mu _n$ the $\acute{\text{e}}$tale sheaf of $n$-th roots of unity. For an integer $i\geq  0$, we denote $\mb{Z}/n(i)$ the $\acute{\text{e}}$tale sheaf $\mu _n^{\otimes i}$.

Let $X$ be a smooth variety over a perfect field of positive characteristic $p$. For a positive integer $n=mp^r \ \bigl((m,p)=1\bigr)$ and a positive integer $i$, we put $\mb{Z}/n(i):=\mb{Z}/m(i)\bigoplus W_r\Omega _{X,log}^i[-i]$, where $W_r\Omega _{X,log}^{\bullet }$ denotes the logarithmic part of the de Rham$-$Witt complex $W_r\Omega _X^{\bullet  }$ on $X_{\acute{\text{e}}\text{t}}$ (cf. \cite{Il}).

(0.6) We recall here Bloch$-$Kato conjecture.
\begin{conj}[Bloch$-$Kato \cite{BK}]\label{BKC}
Let $k$ be a field and $i$ be non-negative integer. Then for a positive integer $n$ prime to the characteristic $ch(k)$ of $k$, the following Galois symbol map is bijective
\begin{equation*}
h_{k,n}^i : K_i(k)/n \longrightarrow H^i(k,\mathbb{Z}/n(i)).
\end{equation*}
\end{conj}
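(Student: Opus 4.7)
The plan is to sketch the strategy (due to Rost and Voevodsky) that eventually established what is now called the Norm Residue Isomorphism Theorem. First, I would reduce to the case where $n = \ell$ is a prime: given the five lemma applied to the short exact sequences $0 \to \mb{Z}/\ell(i) \to \mb{Z}/\ell^{s+1}(i) \to \mb{Z}/\ell^s(i) \to 0$ and the compatible filtration on $K_i(k)/\ell^{s+1}$, one can bootstrap from prime coefficients to arbitrary $n$ coprime to $ch(k)$. I would then induct on $i$. The base cases are classical: $i=0$ is trivial, $i=1$ is Kummer theory (a form of Hilbert 90), and $i=2$ is the theorem of Merkurjev--Suslin, proved by constructing generic splitting varieties (Severi--Brauer varieties) for symbols $\{a,b\}$ and using $K$-cohomology of conics.

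For the inductive step, the key reformulation (Suslin--Voevodsky, Geisser--Levine) is that $BK(i,\ell)$ is equivalent to the \emph{Beilinson--Lichtenbaum} statement $BL(i,\ell)$, which asserts that the natural comparison $\mb{Z}/\ell(i) \to \tau_{\leq i} R\pi_* \mu_\ell^{\otimes i}$ from the Nisnevich motivic complex to the truncation of its \'etale pushforward is a quasi-isomorphism on smooth schemes over $k$. Assuming $BK(j,\ell)$ for $j < i$ (hence $BL(j,\ell)$), one wants to rule out the existence of a field extension $F/k$ and a nonzero symbol $\{a_1,\dots,a_i\}\in K_i^M(F)/\ell$ that maps to zero in $H^i(F,\mb{Z}/\ell(i))$. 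To this end I would attach to such a symbol a Rost \emph{norm variety} $X$: a smooth projective $F$-variety of dimension $\ell^{i-1}-1$ over whose function field the symbol becomes divisible, and whose motive (after passage to $\mb{Z}/\ell$-coefficients) contains a direct summand $M_\rho$, the Rost motive, with explicitly prescribed motivic cohomology.

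The conclusion would then be extracted by combining two deep inputs. First, the Rost motive $M_\rho$ satisfies a ``simplicial/Pfister-like'' periodicity whose motivic cohomology $H^{*,*}(M_\rho,\mb{Z}/\ell)$ is a free module of known rank over a carefully identified subring; this is where the geometric construction of norm varieties (Markus Rost) enters. Second, I would invoke Voevodsky's motivic Steenrod algebra acting on mod-$\ell$ motivic cohomology in the Morel--Voevodsky $\mb{A}^1$-homotopy category, together with the motivic analogue of the Milnor operations $Q_j$. The interplay between these operations and the Rost motive forces a characteristic class of the assumed nontrivial symbol to vanish for purely topological reasons, contradicting the hypothesis that the symbol was nonzero.

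The main obstacle, by a wide margin, is the construction and the motivic analysis of the Rost norm varieties in arbitrary weight $i$ and the parallel development of the motivic Steenrod algebra: these are the two pillars that required entirely new foundations (stable motivic homotopy theory, Voevodsky's triangulated categories of motives, the slice filtration). A secondary but still serious obstacle is the characteristic-$p$ divisibility: when $\ell = ch(k)$ the statement must be handled separately via logarithmic de Rham--Witt sheaves (Bloch--Kato--Gabber), but the conjecture as stated excludes this case, so that difficulty is sidestepped here.
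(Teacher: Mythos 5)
The first thing to note is that the paper does not prove this statement at all: it is displayed as a \emph{Conjecture} (Bloch--Kato), and the author only records, in the theorem immediately following it, the cases that were known at the time and that are actually used later --- namely $i\le 2$ for arbitrary $n$ prime to $ch(k)$ (trivial for $i=0$, Kummer theory for $i=1$, Merkur'ev--Suslin for $i=2$) and $n=2$ for arbitrary $i$ (Voevodsky). Everything in the paper that depends on Bloch--Kato (the identification of the $E^2$-terms in Proposition 1.12 via Theorems 1.8, 1.9 and 1.11) is arranged so that only these special cases are needed. So there is no ``paper proof'' to compare yours against; the honest comparison is between your text and the full Rost--Voevodsky proof of the Norm Residue Isomorphism Theorem.

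As a survey of that program your sketch is broadly accurate: reduction to prime coefficients, induction on $i$ with the classical base cases, the equivalence of $BK(i,\ell)$ with the Beilinson--Lichtenbaum statement, Rost's norm varieties of dimension $\ell^{i-1}-1$, the Rost motive, and the motivic Steenrod/Milnor operations are indeed the ingredients. But it is not a proof, and the gap is exactly where you locate ``the main obstacle'': the existence of $\nu_{i-1}$-norm varieties splitting a given symbol in every weight, the splitting off of the Rost motive with the prescribed motivic cohomology, and the construction of the motivic Steenrod algebra together with the vanishing argument that derives the contradiction are each major theorems that you name but do not establish or even reduce to cited results. A few smaller points: for $i=2$ the generic splitting varieties are Severi--Brauer varieties in general, not conics (that is the $\ell=2$ case); the symbol becomes \emph{trivial}, not merely divisible, over the function field of the norm variety; and the surjectivity half of the statement also has to be addressed (usually via the Beilinson--Lichtenbaum reformulation and passage to fields with no prime-to-$\ell$ extensions), which your injectivity-only framing elides. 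None of this is needed for the paper you are reading: if your goal is to justify the uses of Conjecture 0.1 in this paper, it suffices to quote Merkur'ev--Suslin and Voevodsky as the author does, rather than to reprove the general theorem.
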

For this conjecture, the following results is known:
\begin{thm}\label{TBK}Conjecture $\ref{BKC}$ is true for the following cases:\\
\textup{(i)} $i\leq 2$, and $n$ is arbitrary positive integer prime to $ch(k)$.\\
\textup{(ii)} $n=2$, and $i$ is arbitrary non-negative integer. 
\end{thm}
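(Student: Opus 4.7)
The plan is to treat the two parts separately, since each relies on a substantial body of external work.

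For part (i), I would first dispose of the small cases. When $i=0$, both $K_0(k)/n$ and $H^0(k,\mb{Z}/n)$ equal $\mb{Z}/n$ and $h_{k,n}^0$ is the identity. When $i=1$, $K_1(k)=k^{\times}$, and $h_{k,n}^1\colon k^{\times}/n\to H^1(k,\mu_n)$ is the Kummer isomorphism obtained by taking Galois cohomology of $1\to\mu_n\to\mb{G}_m\xrightarrow{n}\mb{G}_m\to 1$ and applying Hilbert's Theorem 90. The serious content is the case $i=2$, which is the Merkurjev--Suslin theorem. The proof proceeds by reducing to the case where $n=\ell$ is a prime distinct from $\mathrm{ch}(k)$ and, via a transfer/corestriction argument on $K_2$ of the extension $k(\mu_\ell)/k$, reducing further to the case $\mu_\ell\subset k$. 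The geometric heart is the study of $K_2$ of the function field of the Severi--Brauer variety $\mathrm{SB}(A)$ of a cyclic algebra $A=(a,b)_\ell$: a computation using the Brown--Gersten--Quillen spectral sequence together with Suslin's vanishing results on $K_2$ of Severi--Brauer varieties shows that every class in $H^2(k,\mu_\ell^{\otimes 2})$ is a sum of symbols and that the only relations are the Milnor ones.

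For part (ii), the Milnor conjecture, I would invoke Voevodsky's proof. The first step is to identify $K_i^M(k)/2$ and $H^i(k,\mb{Z}/2(i))$ with motivic cohomology groups via the Nesterenko--Suslin--Totaro isomorphism and the motivic-to-\'etale comparison. One then argues by induction on $i$: a hypothetical nonzero symbol $\alpha\in K_i^M(k)/2$ in the kernel of $h_{k,2}^i$ is killed on a suitable norm variety $Q_\alpha$ (a Pfister quadric), whose motive admits a Rost decomposition. Playing motivic Steenrod squares against the known motivic cohomology of $B\mu_2$ and $Q_\alpha$ produces a contradiction, yielding injectivity; surjectivity follows by induction along the weight filtration, equivalently from the Beilinson--Lichtenbaum statement at the prime $2$.

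The main obstacle is that the substance of both parts lies entirely in deep external results: Merkurjev--Suslin for $i=2$ in part (i), and the full Voevodsky program (motivic Steenrod algebra, Rost motives of norm varieties, and the Beilinson--Lichtenbaum conjecture at $2$) for part (ii). In the present paper the theorem functions as a black box, so what a ``proof'' here really amounts to is a pointer to the literature together with a verification that the hypotheses, namely arbitrary $k$ with $n$ coprime to $\mathrm{ch}(k)$ in (i) and $n=2$ arbitrary in (ii), match exactly the generality of the cited theorems.
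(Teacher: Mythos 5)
Your proposal matches the paper's treatment exactly: the paper offers no proof beyond the attributions you give ($i=0$ clear, $i=1$ Kummer theory, $i=2$ Merkurjev--Suslin, and part (ii) Voevodsky), and like you it uses the theorem purely as a black box. Your additional sketches of the Merkurjev--Suslin and Voevodsky arguments are accurate but go beyond what the paper records; the essential content in both cases is the correct citation with hypotheses matching the generality needed.
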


The case $i=1$ follows from the Kummer theory, and the case $i=0$ is clear. The case $i=2$ is proved by Merkur'ev$-$Suslin \cite{MS}. The case (ii) is due to Voevodsky \cite{V}.
\section{Preliminary}

In this section, we prepare some lemmas and theorems for the proof of the main result. We also recall the cohomological Hasse principle and $\acute{\text{e}}$tale homology theory.

Through this paper, $k$ is a finite field and $n$ is a natural number. 

\subsection{Simple normal crossing varieties over finite fields}

We here prove some lemmas about simple normal crossing varieties over finite fields. In case of curves, similar lemmas are proved in \cite{MSA}. We extend these lemmas to higher dimensional cases. First we define a simple normal crossing variety over a field.

\begin{df}\label{d1}
Let $X$ be a equidimensional scheme of finite type over a field $k$. Then we call $X$ a \textit{normal crossing variety} over $k$, if $X$ is separated over $k$ and everywhere $\acute{\text{e}}$tale locally isomorphic to 
\begin{align*}
\mathrm{Spec}\bigl(k[T_0,\cdots, T_d]/(T_0T_1\cdots T_r)\bigr) \ \ \ (0 \leq r\leq d=\dim X).
\end{align*}

A normal crossing variety $X$ is called \textit{simple} if any irreducible component of $X$ is smooth over $k$. For a simple normal crossing variety $X$, we use the following notation: Let $\{ X_i\}_{i \in I}$ be the set of irreducible components of $X$. For a positive integer $r$, we define
\begin{align*}
X^{(r)} := \coprod _{\{ i_1, i_2, \dots ,i_r \} \subset I} X_{i_1}\times _XX_{i_2}\times _X \dots \times _XX_{i_r}.
\end{align*}
\end{df}

Now we define a simplicial complex of which homology groups are very important tool in the unramified class field theory for simple normal crossing varieties over finite fields.

\begin{df}\label{d2}
Let $X$ be a $d$-dimensional simple normal crossing variety over $k$. Then we define a simplicial complex $\Gamma _X$ called the \textit{dual graph} of $X$ as follows:

Let $\{ X_i \}_{i \in I}$ be the set of irreducible components of $X$. Fix an ordering on $I$. The set of $r$-simplexes $\mf{S}_r$ of $\Gamma _X$ is the set of irreducible components of $X^{(r)}$. We determine the orientation on $r$-simplexes inductively on $r$ by the fixed ordering on $I$ (cf. \cite[$\S3$]{JS}). 

Let $F/k$ be an algebraic extension. We put $Y:=X\otimes _kF$. Let $\{ Y_j\}_{j\in J}$ be the set of irreducible components of $Y$. Then we define a semi-order on $J$ as follows: $j_1, j_2 \in J,$
\begin{align*}
j_1 < j_2 \Longleftrightarrow \phi (j_1) < \phi (j_2),
\end{align*}
where $\phi : J \longrightarrow I$ is the map which sends $j$ to $\phi (j)$ when $Y_j$ lies above $X_{\phi (j)}$. By using this order on $J$, we define the homomorphism of the complexes
\begin{align*}
\sigma_{F/k} :\Gamma _Y \longrightarrow \Gamma _X.
\end{align*}
Then the homomorphism $H_a(\Gamma _Y, \mb{Z})\longrightarrow H_a(\Gamma _X,\mb{Z})$ induced by $\sigma _{F/k}$ is called \textit{norm map}. 
\end{df}

In the rest of this subsection, $X$ denotes a simple normal crossing variety over a finite field $k$ which is proper over $k$.

\begin{lem}\label{L1}
\textup{(1)} The degree `$0$-part' $A_0(X)$ of $CH_0(X)$ is finite.\\
\textup{(2)} Assume that each connected component of $X^{(2)}$ has a $k$-rational point. Then the canonical map
 $\iota : \bigoplus_{i \in I} A_0(X_i) \longrightarrow A_0(X)$ is surjective. 
\end{lem}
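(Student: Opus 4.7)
My approach to both parts rests on the right-exact sequence
\begin{equation*}
CH_0(X^{(2)}) \xrightarrow{\ \phi\ } \bigoplus_{i\in I} CH_0(X_i) \xrightarrow{\ \iota'\ } CH_0(X) \longrightarrow 0,
\end{equation*}
where $\iota'$ is the sum of pushforwards along the closed immersions $X_i \hookrightarrow X$ and $\phi$ sends the class of a closed point $y$ in a double intersection $X_{i_1}\cap X_{i_2}$ (with $i_1<i_2$) to $([y]_{X_{i_1}},-[y]_{X_{i_2}})$. Surjectivity of $\iota'$ is clear since every closed point of $X$ lies on some $X_i$; exactness in the middle is verified by lifting rational equivalences on curves in $X$ to the normalization $X^{(1)}\to X$, treating the case of curves meeting several components by reduction to the defining relations coming from $X^{(2)}$.

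For part (1) I first reduce to the case $X$ connected (so that $\Gamma_X$ is a connected simplicial complex). Each $X_i$ is smooth projective over the finite field $k$, so by Kato--Saito unramified class field theory \cite{KS} the group $A_0(X_i)$ is finite, whence $CH_0(X_i)$ is finitely generated of rank one; the same reasoning applied to the smooth projective pieces of $X^{(2)}$ shows $CH_0(X^{(2)})$ is finitely generated. The right-exact sequence then makes $CH_0(X)$ finitely generated. To conclude it suffices to show $CH_0(X)\otimes\mathbb{Q}$ has rank one. For each nonempty component of $X^{(2)}$, i.e.\ each edge of $\Gamma_X$ joining $i_1$ and $i_2$, any closed point $y$ of that component gives an element of $\ker(\iota')\otimes\mathbb{Q}$ that identifies the rank-one summands $CH_0(X_{i_1})\otimes\mathbb{Q}$ and $CH_0(X_{i_2})\otimes\mathbb{Q}$ inside $CH_0(X)\otimes\mathbb{Q}$; connectedness of $\Gamma_X$ then collapses the image of $\bigoplus_i CH_0(X_i)\otimes\mathbb{Q}$ to a single copy of $\mathbb{Q}$. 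Hence $A_0(X)$ is a finitely generated torsion group, i.e.\ finite.

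For part (2) I again reduce to $X$ connected and take $\alpha\in A_0(X)$. Lifting $\alpha$ via $\iota'$ gives a tuple $(a_i)\in\bigoplus_i CH_0(X_i)$ with $\sum_i \deg_{X_i}(a_i)=\deg(\alpha)=0$, and the task is to adjust $(a_i)$ within its $\iota'$-fiber so that each component has degree zero. Under the hypothesis, for every pair $(i_1,i_2)$ with $X_{i_1}\cap X_{i_2}\neq\emptyset$ there is a $k$-rational point $y$ of that component, and $\phi([y])\in\ker(\iota')$ has degree profile $e_{i_1}-e_{i_2}\in\mathbb{Z}^{|I|}$. As $\Gamma_X$ is connected, these differences span the augmentation kernel $\{(n_i):\sum n_i=0\}\subset\mathbb{Z}^{|I|}$, so $(\deg a_i)_i$ lies in $(\deg\circ\phi)(CH_0(X^{(2)}))$. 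Subtracting an appropriate element of $\phi(CH_0(X^{(2)}))$ from $(a_i)$ yields the desired $(b_i)\in\bigoplus_i A_0(X_i)$ lifting $\alpha$.

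The main obstacle is the careful justification of the middle exactness of the displayed sequence; once it is in hand, everything else is a combinatorial degree-bookkeeping argument on $\Gamma_X$. The finite-field hypothesis enters only through finiteness of $A_0(X_i)$ for the smooth projective components, and, in part (2), through the existence of the $k$-rational points in the intersections that make the transport along edges of $\Gamma_X$ effective at degree $\pm 1$.
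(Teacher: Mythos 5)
Your proposal is correct and follows essentially the same route as the paper: both rest on the presentation $CH_0(X^{(2)})\xrightarrow{\phi}\bigoplus_{i}CH_0(X_i)\xrightarrow{\iota'}CH_0(X)\to 0$ (which the paper packages, together with the degree maps, into the commutative diagram \eqref{a1} and a diagram chase), on the Kato--Saito finiteness of $A_0(X_i)$ for the smooth proper components, and on degree bookkeeping along the edges of $\Gamma_X$ (with connectedness of $X$ implicitly assumed, as in the paper). One small remark: the middle exactness you single out as the main obstacle is never actually needed --- your rank-one computation in (1) and your degree-adjustment in (2) use only that $\iota'$ is surjective and that $\iota'\circ\phi=0$, both of which are immediate.
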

\begin{proof}We consider the following commutative diagram with exact rows:
\begin{align}\label{a1}
\xymatrix{
& & CH_0(X^{(2)}) \ar[r]^{deg_{X^{(2)}/k}} \ar[d]&\bigoplus K_0(k) \ar[d] \ar[r] &\bigoplus \mb{Z}/m_l \ar[d]^{\nu }\rightarrow 0\\
0 \ar[r] & \bigoplus_i A_0(X_i) \ar[r] \ar[d]^{\iota } & \bigoplus_i CH_0(X_i)\ar[r]^{deg_{X^{(1)}/k}} \ar[d] & \bigoplus_i K_0(k) \ar[d] \ar[r] & \bigoplus \mb{Z}/m_i\rightarrow 0\\
0 \ar[r] & A_0(X) \ar[r] & CH_0(X) \ar[r] & K_0(k).
}
\end{align}
Here $m_l = [ \Gamma (X_l^{(2)},\ml{O}_{X_l^{(2)}}):k]$ for connected components $X_l^{(2)}$ of $X^{(2)}$ and $m_i = [ \Gamma (X_i,\ml{O}_{X_i}):k]$.

(1) Since $X_i$ is smooth for any $i$, $A_0(X_i)$ is finite by a theorem of Kato-Saito \cite{KS}.

On the other hand, by the diagram we have a surjective map from the kernel of $\nu $ to the cokernel of $\iota$. Hence we see that the cokernel of the map $\iota $ is finite by and that $A_0(X)$ is finite.

(2) Under the assumption, the map $deg_{X^{(2)}/k}$ is surjective. The assertion follows from the diagram \eqref{a1}. 
\end{proof}
\begin{lem}\label{L2}
Let $\{ W_s \}$ be the set of connected components of $\overline{X}$, and $\{ V_j \}$ be the set of irreducible components of $\overline{X}$. Then there is an exact sequence of finite left $G_k$-modules :
\begin{align}\label{a2}
\xymatrix{
\bigoplus \pi _1^{ab}(V_j)/n \ar[r] &\bigoplus \pi _1^{ab}(W_s)/n \ar[r] &H_1(\Gamma _{\overline{X}},\mb{Z}/n) \ar[r] &0.
}
\end{align}
Further the following diagram commutes:
\begin{align}\label{a3}
\xymatrix{
\bigoplus \pi_1^{ab}(W_s)/n \ar[r] \ar[d]_{(*1)} & \pi_1^{ab}(X)/n \ar[d]_{(*2)}\\
H_1(\Gamma _{\overline{X}}, \mb{Z}/n) \ar[r] & H_1(\Gamma _X, \mb{Z}/n).
}
\end{align}
\end{lem}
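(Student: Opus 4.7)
The plan is to derive the exact sequence \eqref{a2} as the Pontryagin dual of a low-degree exact sequence in \'etale cohomology of $\overline{X}$, produced by a Mayer--Vietoris type (\v{C}ech) spectral sequence attached to the stratification of $\overline{X}$ by irreducible components, and to deduce the commutativity in \eqref{a3} from the naturality of this construction. First I would set up the standard stratified resolution of the constant sheaf on $\overline{X}$,
\[
0\to\mb{Z}/n\to \pi_*^{(1)}\mb{Z}/n\to\pi_*^{(2)}\mb{Z}/n\to\pi_*^{(3)}\mb{Z}/n\to\cdots,
\]
where $\pi^{(p)}\colon \overline{X}^{(p)}\to\overline{X}$ is the canonical morphism (a closed immersion on each irreducible component of $\overline{X}^{(p)}$). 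At a geometric point lying on exactly $r$ irreducible components, the stalk of this complex is the augmented simplicial cochain complex of an $(r-1)$-simplex, hence acyclic, so the resolution is exact in every characteristic. Because each $\pi^{(p)}$ is finite, the associated hypercohomology spectral sequence reads
\[
E_1^{p,q}=H^q\bigl(\overline{X}^{(p+1)},\mb{Z}/n\bigr)\ \Longrightarrow\ H^{p+q}(\overline{X},\mb{Z}/n),
\]
and, up to the signs fixed by the chosen ordering of $I$, its $q=0$ row is the simplicial cochain complex of $\Gamma_{\overline{X}}$ with $\mb{Z}/n$-coefficients, so $E_2^{p,0}\cong H^p(\Gamma_{\overline{X}},\mb{Z}/n)$.

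From the five-term exact sequence in low degree I then extract
\[
0\to H^1(\Gamma_{\overline{X}},\mb{Z}/n)\to H^1(\overline{X},\mb{Z}/n)\to \bigoplus_j H^1(V_j,\mb{Z}/n),
\]
an exact sequence of finite $G_k$-modules: the left-hand group is finite because $\Gamma_{\overline{X}}$ is a finite simplicial complex, and the other two because $\overline{X}$ and each $V_j$ are proper over $k^{sep}$. Applying $\mathrm{Hom}(-,\mb{Z}/n)$ and using Pontryagin duality then yields \eqref{a2}: on the geometric side, for every connected proper $W$ over $k^{sep}$ one has $\mathrm{Hom}(H^1(W,\mb{Z}/n),\mb{Z}/n)\cong \pi_1^{ab}(W)/n$, together with $H^1(\overline{X},\mb{Z}/n)=\bigoplus_s H^1(W_s,\mb{Z}/n)$; on the combinatorial side the simplicial chain complex of $\Gamma_{\overline{X}}$ is free of finite rank, so $\mathrm{Hom}(H^1(\Gamma_{\overline{X}},\mb{Z}/n),\mb{Z}/n)\cong H_1(\Gamma_{\overline{X}},\mb{Z}/n)$. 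The $G_k$-equivariance is automatic from the naturality of the whole construction under the Galois action on $\overline{X}$.

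For the commutativity of \eqref{a3}, I would observe that every step above is functorial with respect to the canonical morphism of ``simplicial schemes'' $\overline{X}^{(\bullet)}\to X^{(\bullet)}$ induced by $\overline{X}\to X$. At the combinatorial level this morphism coincides with the map $\sigma_{k^{sep}/k}\colon\Gamma_{\overline{X}}\to \Gamma_X$ of the preceding definition, so the induced map on $H_1(-,\mb{Z}/n)$ is the norm map appearing as the bottom arrow of \eqref{a3}; on the cohomological side it induces, after Pontryagin dualisation, the canonical map $\bigoplus_s\pi_1^{ab}(W_s)/n\to\pi_1^{ab}(X)/n$ appearing as the top arrow. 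The commutativity of \eqref{a3} is then just the naturality of the five-term sequence. I expect the principal obstacle to be matching the simplicial boundary signs produced on the $E_1$-page of the spectral sequence with the combinatorial boundary of $\Gamma_{\overline{X}}$ determined by the fixed ordering of $I$; once this sign bookkeeping is dispatched, the remainder of the argument is formal manipulation of spectral sequences and Pontryagin duality for finite abelian groups.
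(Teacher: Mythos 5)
Your proposal is correct and follows essentially the same route as the paper: the \v{C}ech-type resolution $0\to\mb{Z}/n_{\overline{X}}\to\bigoplus\mb{Z}/n_{V_j}\to\cdots$, the resulting spectral sequence whose $q=0$ row is the simplicial cochain complex of $\Gamma_{\overline{X}}$, the low-degree exact sequence $0\to H^1(\Gamma_{\overline{X}},\mb{Z}/n)\to H^1(\overline{X},\mb{Z}/n)\to\bigoplus_jH^1(V_j,\mb{Z}/n)$, Pontryagin dualisation, and naturality under $\overline{X}\to X$ for the commutativity of \eqref{a3}. The extra details you supply (stalkwise acyclicity of the resolution, the sign bookkeeping against the fixed ordering of $I$) are exactly the points the paper leaves implicit, so no gap remains.
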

\begin{proof}
Since we have the canonical isomorphism 
\begin{align*}
\bigoplus \pi _1^{ab}(W_s)/n\simeq \mathrm{Hom}\bigl( H^1(\overline{X}, \mb{Z}/n), \mb{Q}/\mb{Z} \bigr),
\end{align*}
it is sufficient to prove the corresponding statements on $\acute{\text{e}}$tale cohomology groups and `cohomology' of dual graphs. The finiteness of groups in \eqref{a2} follows from the finiteness of $\acute{\text{e}}$tale cohomology groups \cite[XVI, 5.2]{SGA4} and the definition of dual graph.

We consider the following exact sequence of $\acute{\text{e}}$tale sheaves on $\overline{X}_{\acute{\text{e}}t}$:
\begin{align*}
\xymatrix{
0 \ar[r] & \mb{Z}/n_{\overline{X}} \ar[r] & \bigoplus \mb{Z}/n_{V_j} \ar[r] & \cdots \ar[r]& \bigoplus\mb{Z}/n_{{\overline{X}}^{(d+1)}_t} \ar[r] &0.
}
\end{align*}
Here ${\overline{X}}^{(d+1)}_t$ denotes irreducible components of ${\overline{X}}^{(d+1)}$, and we have omitted the indication of direct image functors of sheaves. From this exact sequence, we obtain a spectral sequence
\begin{align*}
E_1^{p,q}=H^q({\overline{X}}^{(p)}, \mb{Z}/n) \Longrightarrow H^{p+q}(\overline{X}, \mb{Z}/n).
\end{align*}
By computing $E_2$-terms, we have an exact sequence
\begin{align*}
\xymatrix{
0 \ar[r] & H^1(\Gamma _{\overline{X}},\mb{Z}/n) \ar[r] & H^1(\overline{X}, \mb{Z}/n) \ar[r] & \bigoplus_{j}H^1(V_j, \mb{Z}/n). 
}
\end{align*}
The Pontryagin dual of this sequence provides us with the map $(*1)$ and proves the exactness of \eqref{a2}. The commutativity of \eqref{a3} follows from the following commutative diagram of $\acute{\text{e}}$tale sheaves on $X_{\acute{\text{e}}\text{t}}$
\begin{align*}
\xymatrix{
0 \ar[r] & \mb{Z}/n_{X} \ar[r] \ar[d] & \bigoplus \mb{Z}/n_{X_i} \ar[r] \ar[d] & \cdots \ar[r] & \mb{Z}/n_{X^{(d+1)}} \ar[r] \ar[d]&0\\
0 \ar[r] & \mb{Z}/n_{\overline{X}} \ar[r] & \bigoplus \mb{Z}/n_{V_j} \ar[r] & \cdots \ar[r]& \mb{Z}/n_{{\overline{X}}^{(d+1)}} \ar[r] &0,
}
\end{align*}
and the fact that the map $(*2)$ comes from the upper row. 
\end{proof}

The following lemma follows from the Hochshild-Serre spectral sequence associated with $\overline{X}\rightarrow X$ (cf. \cite{Mi}).

\begin{lem}\label{L3}
The canonical map $\bigoplus \pi_1^{ab}(W_s)\longrightarrow \pi_1^{ab}(X)$ induces an isomorphism
\begin{align*}
\bigoplus \pi_1^{ab}(W_s)_{G_k}\simeq  \pi_1^{geo}(X).
\end{align*}
\end{lem}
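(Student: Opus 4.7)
The plan is to apply the Hochschild-Serre spectral sequence for a suitable Galois cover of $X$ and then translate to fundamental groups by Pontryagin duality. By the direct-sum conventions defining $\pi _1^{ab}$ and $\pi _1^{geo}$ for disconnected schemes, it suffices to treat $X$ connected; let $k_X$ be the field of constants of $X$ and $\widetilde{X} := X \otimes _{k_X}k^{sep}$, a connected component of $\overline{X}$. Then $\widetilde{X} \to X$ is a pro-\'etale Galois cover with group $G_{k_X}$.

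The Hochschild-Serre spectral sequence is
$$E_2^{p,q} = H^p(G_{k_X}, H^q(\widetilde{X}, \mb{Q}/\mb{Z})) \Longrightarrow H^{p+q}(X, \mb{Q}/\mb{Z}).$$
Since $G_{k_X} \simeq \hat{\mb{Z}}$ has cohomological dimension one and $\widetilde{X}$ is connected (so $H^0(\widetilde{X}, \mb{Q}/\mb{Z}) = \mb{Q}/\mb{Z}$), the spectral sequence collapses to a short exact sequence
$$0 \to H^1(G_{k_X}, \mb{Q}/\mb{Z}) \to H^1(X, \mb{Q}/\mb{Z}) \to H^1(\widetilde{X}, \mb{Q}/\mb{Z})^{G_{k_X}} \to 0.$$

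Taking Pontryagin duals via $\pi _1^{ab}(-) \simeq H^1(-, \mb{Q}/\mb{Z})^{\vee }$ and using that duality exchanges $G_{k_X}$-invariants with $G_{k_X}$-coinvariants on torsion modules, we obtain
$$0 \to \pi _1^{ab}(\widetilde{X})_{G_{k_X}} \to \pi _1^{ab}(X) \to G_{k_X}^{ab} \to 0,$$
the right-hand edge being the canonical structure map by functoriality of the Leray spectral sequence for $X \to \mathrm{Spec}(k_X)$. Since the $G_k$-orbit of $\widetilde{X}$ in $\pi _0(\overline{X})$ is $G_k/G_{k_X}$, one identifies $\bigoplus _s \pi _1^{ab}(W_s) \simeq \mathrm{Ind}_{G_{k_X}}^{G_k} \pi _1^{ab}(\widetilde{X})$, and Shapiro's lemma gives $\bigl( \bigoplus _s \pi _1^{ab}(W_s) \bigr)_{G_k} \simeq \pi _1^{ab}(\widetilde{X})_{G_{k_X}}$. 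Finally, $G_{k_X}^{ab} \hookrightarrow G_k^{ab}$, so the kernel of $\pi _1^{ab}(X) \to G_{k_X}^{ab}$ coincides with $\pi _1^{geo}(X) = \mathrm{Ker}(\pi _1^{ab}(X) \to G_k^{ab})$, completing the argument.

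The main subtlety I expect is verifying that the edge map of Hochschild-Serre coincides with the canonical structure map $\pi _1^{ab}(X) \to G_{k_X}^{ab}$; this is a standard but careful functoriality check. Everything else follows directly from $\mathrm{cd}(G_{k_X}) = 1$, Shapiro's lemma, and Pontryagin duality.
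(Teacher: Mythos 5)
Your argument is correct and is exactly the paper's intended proof: the paper disposes of this lemma in one line by citing the Hochschild--Serre spectral sequence for $\overline{X}\rightarrow X$, and your write-up (reduction to the connected case over the constant field $k_X$, collapse of the spectral sequence since $\mathrm{cd}(G_{k_X})=1$ on torsion coefficients, Pontryagin duality turning invariants into coinvariants, and Shapiro's lemma identifying $\bigl(\bigoplus_s\pi_1^{ab}(W_s)\bigr)_{G_k}$ with $\pi_1^{ab}(\widetilde{X})_{G_{k_X}}$) supplies precisely the details behind that citation. The only point worth noting is that your opening reduction tacitly reads $\pi_1^{geo}$ of a disconnected scheme componentwise, which is the interpretation the paper needs anyway in its applications.
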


We put $\displaystyle H_1(\Gamma _X, \hat{\mb{Z}}):= \lim_{\longleftarrow n} H_1(\Gamma _X, \mb{Z}/n)$. We write $H_1(\Gamma _X, \hat{\mb{Z}})_{\overline{X}}$ for the image of the norm map $H_1(\Gamma _{\overline{X}}, \hat{\mb{Z}}) \longrightarrow H_1(\Gamma _X, \hat{\mb{Z}})$. The following proposition is the `geometric' part of the unramified class field theory for a simple normal crossing variety. 

\begin{prop}\label{P1}
There is an exact sequence 
\begin{align*}
\xymatrix{
A_0(X) \ar[r]^{\rho _X^{geo}} & \pi_1^{geo}(X) \ar[r]^{(*3)} & H_1(\Gamma _X, \hat{\mb{Z}})_{\overline{X}} \ar[r] &0.
}
\end{align*}
Further the following diagram commutes:
\begin{align*}
\xymatrix{
\pi_1^{geo}(X) \ar[r] \ar[d]_{(*3)} & \pi_1^{ab}(X) \ar[d]_{(*2)}\\
H_1(\Gamma _{X}, \hat{\mb{Z}})_{\overline{X}} \ar[r] & H_1(\Gamma _X, \hat{\mb{Z}}).
}
\end{align*}
\end{prop}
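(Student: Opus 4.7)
The strategy is to obtain the exact sequence by applying $G_k$-coinvariants to a profinite version of the exact sequence in Lemma \ref{L2}, and then matching the resulting terms with $A_0(X)$ using the Kato--Saito theorem on smooth components.

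First I would take the sequence of Lemma \ref{L2} mod $n$ and pass to the inverse limit over $n$; since all groups in Lemma \ref{L2} are finite, Mittag--Leffler yields the exact sequence
\begin{equation*}
\bigoplus _j \pi _1^{ab}(V_j) \longrightarrow \bigoplus _s \pi _1^{ab}(W_s) \longrightarrow H_1(\Gamma _{\overline{X}}, \hat{\mb{Z}}) \longrightarrow 0.
\end{equation*}
Taking $G_k$-coinvariants is right exact, and Lemma \ref{L3} identifies the middle term with $\pi _1^{geo}(X)$. Applying Lemma \ref{L3} to each smooth irreducible component $X_i$ (so that the $V_j$ lying above $X_i$ are exactly the connected components of $\overline{X_i}$) identifies the left term with $\bigoplus _i \pi _1^{geo}(X_i)$. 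The right term $H_1(\Gamma _{\overline{X}}, \hat{\mb{Z}})_{G_k}$ I would identify with $H_1(\Gamma _X, \hat{\mb{Z}})_{\overline{X}}$ by a chain-level argument: the $G_k$-orbits of components of $\overline{X}^{(r)}$ are in natural bijection with the components of $X^{(r)}$, yielding an isomorphism $C_{\bullet }(\Gamma _{\overline{X}})_{G_k}\simeq C_{\bullet }(\Gamma _X)$ through which $\sigma _{k^{sep}/k}$ factors.

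Next I would replace each $\pi _1^{geo}(X_i)$ by $A_0(X_i)$ using Kato--Saito \cite{KS}: since $\rho _{X_i}$ is injective with torsion-free cokernel, and $A_0(X_i)$ is finite by Lemma \ref{L1}(1), the restriction of $\rho _{X_i}$ gives an isomorphism $A_0(X_i)\simeq \pi _1^{geo}(X_i)$. This yields
\begin{equation*}
\bigoplus _i A_0(X_i) \longrightarrow \pi _1^{geo}(X) \longrightarrow H_1(\Gamma _X, \hat{\mb{Z}})_{\overline{X}} \longrightarrow 0.
\end{equation*}
The canonical map $\iota : \bigoplus _i A_0(X_i)\to A_0(X)$ from Lemma \ref{L1} shows the image of the left map is contained in the image of $A_0(X)\to \pi _1^{geo}(X)$. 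For the reverse, every closed point of $X$ lies on some $X_i$, so the image of $CH_0(X)\to \pi _1^{ab}(X)$ is contained in the image of $\bigoplus _i \pi _1^{ab}(X_i)\to \pi _1^{ab}(X)$; by the $X$-analogue of Lemma \ref{L2} (established in its proof, via the top row of the sheaf diagram), the composite $\bigoplus _i \pi _1^{ab}(X_i)\to \pi _1^{ab}(X)\to H_1(\Gamma _X, \hat{\mb{Z}})$ is zero, so $A_0(X)\to \pi _1^{geo}(X)\to H_1(\Gamma _X, \hat{\mb{Z}})_{\overline{X}}$ vanishes. Hence the image of $A_0(X)$ lies in the kernel of $(*3)$, which by the preceding step equals the image of $\bigoplus _i A_0(X_i)$. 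Commutativity of the diagram in the statement is then immediate from the commutative diagram $(*2)$ of Lemma \ref{L2} passed to the inverse limit.

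The main obstacle I anticipate is the identification $H_1(\Gamma _{\overline{X}}, \hat{\mb{Z}})_{G_k}\simeq H_1(\Gamma _X, \hat{\mb{Z}})_{\overline{X}}$: the natural map from $G_k$-coinvariants of homology to homology of the $G_k$-coinvariant chain complex need not be injective in general, so the identification must be established directly at the chain level via the bijection between $G_k$-orbits of simplices of $\Gamma _{\overline{X}}$ and simplices of $\Gamma _X$. The remaining manipulations are then routine consequences of Lemmas \ref{L1}--\ref{L3} and Kato--Saito.
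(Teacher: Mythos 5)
Your route is genuinely different from the paper's, and it has a real gap at exactly the point you flag: the identification $H_1(\Gamma _{\overline{X}},\hat{\mb{Z}})_{G_k}\simeq H_1(\Gamma _X,\hat{\mb{Z}})_{\overline{X}}$. The chain-level argument you propose cannot establish it. The orbit bijection does give $C_{\bullet }(\Gamma _{\overline{X}})_{G_k}\simeq C_{\bullet }(\Gamma _X)$, but homology does not commute with coinvariants, so this computes $H_1(\Gamma _X,\hat{\mb{Z}})$ and says nothing about $H_1(\Gamma _{\overline{X}},\hat{\mb{Z}})_{G_k}$. The natural map $H_1(\Gamma _{\overline{X}},\hat{\mb{Z}})_{G_k}\to H_1(\Gamma _X,\hat{\mb{Z}})$, whose image is by definition $H_1(\Gamma _X,\hat{\mb{Z}})_{\overline{X}}$, can fail to be injective: if $\Gamma _{\overline{X}}$ is a hexagonal cycle of six components on which Frobenius acts by the reflection fixing two opposite vertices, then Frobenius acts by $-1$ on $H_1(\Gamma _{\overline{X}},\hat{\mb{Z}})\simeq \hat{\mb{Z}}$, so the coinvariants are $\mb{Z}/2$, while $\Gamma _X$ is a tree and $H_1(\Gamma _X,\hat{\mb{Z}})=0$. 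Such a configuration is realizable over a finite field (two geometrically irreducible components and two components whose constant field is the quadratic extension).

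This failure is not cosmetic. What your coinvariant computation actually proves is exactness of $\bigoplus _iA_0(X_i)\to \pi _1^{geo}(X)\to H_1(\Gamma _{\overline{X}},\hat{\mb{Z}})_{G_k}\to 0$; to get the proposition you need $\mathrm{Ker}\bigl((*3)\bigr)=\mathrm{Im}\bigl(\bigoplus _iA_0(X_i)\to \pi _1^{geo}(X)\bigr)$, which holds only when the map from coinvariants is injective. In the example above $\mathrm{Ker}\bigl((*3)\bigr)=\pi _1^{geo}(X)$ strictly contains the image of $\bigoplus _iA_0(X_i)$ (compare Lemma \ref{L1}(2), whose rational-point hypothesis fails there), so your sandwich $\mathrm{Im}\bigl(\bigoplus _iA_0(X_i)\bigr)\subseteq \mathrm{Im}\bigl(\rho _X^{geo}\bigr)\subseteq \mathrm{Ker}\bigl((*3)\bigr)$ does not close up. The missing ingredient is the completed reciprocity sequence of Proposition \ref{P2-2} applied to $X$ itself (i.e.\ the \'etale homology spectral sequence and the cohomological Hasse principle), which is what the paper uses: it gives $\mathrm{Coker}\bigl(\rho _X^{\wedge }\bigr)\simeq H_1(\Gamma _X,\hat{\mb{Z}})$, and then the injectivity of $\rho _k^{\wedge }$ together with the finiteness of $A_0(X)$ yields $\mathrm{Coker}\bigl(\rho _X^{geo}\bigr)\simeq (*2)\bigl(\pi _1^{geo}(X)\bigr)$, which Lemmas \ref{L2} and \ref{L3} identify with the image of the norm map. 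Your treatment of the surjectivity of $(*3)$ and of the commutativity assertion is fine, but the exactness at $\pi _1^{geo}(X)$ is not established.
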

\begin{proof}
We write $CH_0(X)^{\wedge }$ for $\displaystyle\lim_{\longleftarrow n} CH_0(X)/n$, $K_0(k)^{\wedge }$ for $\displaystyle\lim_{\longleftarrow n} K_0(k)/n$. We consider the following commutative diagram with exact rows:
\begin{align*}
\xymatrix{
0\ar[r] & A_0(X) \ar[r] \ar[d]_{\rho _X^{geo}} &CH_0(X)^{\wedge } \ar[r]^{deg_{X/k}^{\wedge }} \ar[d]_{\rho _X^{\wedge }} & K_0(k)^{\wedge }\ar[d]_{\rho _k^{\wedge }}\\
0\ar[r] & \pi_1^{geo}(X) \ar[r] &\pi_1^{ab}(X) \ar[r] \ar[d]_{(*2)} &G_k \ar[r] &0\\
& & H_1(\Gamma _X, \hat{\mb{Z}}),
}
\end{align*}
where the map $\rho _X^{\wedge }$ is induced by the reciprocity map $\rho _X$ of the unramified class field theory for $X$. Here we used the finiteness of $A_0(X)$ (cf. Lemma \ref{L1}), and the fact that the pro-finite completion of an exact sequence of finitely generated abelian groups is exact. Since the map $\rho _k^{\wedge }$ is injective (in fact bijective), the cokernel of the map $\rho _X^{geo}$ is isomorphic to the image of $\pi_1^{geo}(X)$ in $H_1(\Gamma _X, \hat{\mb{Z}})$. Therefore $\mathrm{Coker}\bigl( \rho _X^{geo}\bigr)$ coincides with $H_1(\Gamma _X, \hat{\mb{Z}})_{\overline{X}}$ from Lemma \ref{L2}. 
\end{proof}
\begin{rmk}\label{R1}
If $X$ is a curve, the map $\rho_X^{geo}$ in the above proposition is injective by Kato$-$Saito \cite{KS}.
\end{rmk}
\subsection{Bloch$-$Ogus$-$Kato complex}

In this subsection, we recall the theorem called a cohomological Hasse principle. First we recall Bloch$-$Ogus$-$Kato complex. 

For $X$ be an excellent scheme and integers $r,s,n>0$, Kato define a homological complex $C^{r,s}(X,n)$ (cf. \cite[\S 1]{K}):
\begin{align*}
\cdots \rightarrow \bigoplus_{x \in X_i}&H^{r+i}(\kappa (x),\mb{Z}/n(s+i))\rightarrow \bigoplus_{x \in X_{i-1}}H^{r+i-1}(\kappa (x),\mb{Z}/n(s+i-1))\\
&\rightarrow \cdots \rightarrow \bigoplus_{x \in X_1}H^{r+1}(\kappa (x),\mb{Z}/n(s+1))\rightarrow \bigoplus_{x \in X_0}H^{r}(\kappa (x),\mb{Z}/n(s)).
\end{align*}
The degree of the term $\displaystyle \bigoplus_{x \in X_i}$ is $i$. The following canonical map for a fraction field $K$ of a discrete valuation ring and its residue field $F$ plays an important role in the definition of the boundary map for the above complex
\begin{align*}
H^i(K,\mb{Z}/n(j))\longrightarrow H^{i-1}(F,\mb{Z}/n(j-1)).
\end{align*}
\begin{rmk}In the definition of $C^{r,s}(X,n)$, we assume that if $r=s+1$, for any prime divisor $p$ of $n$ and any $x \in X_0$ such that $\mathrm{ch}(\kappa (x))=p$, we have $[\kappa (x):\kappa (x)^p]\leq p^s$. The assumption is satisfied in the case we consider in this paper.
\end{rmk}
\begin{df}\label{d3}
Kato homology of $X$ with coefficient $\mb{Z}/n$ defined by
\begin{align*}
H_i^{r,s}(X,n):=H_i(C^{r,s}(X,n)).
\end{align*}
\end{df}

Kato conjectured the following for varieties over $k$ :
\begin{conj}[Kato \cite{K}]\label{KC}
Let $X$ be a connected projective smooth variety over $k$. Put $H_i^K(X,n):=H_i^{1,0}(X,n)$. Then we have
\begin{align*}
H_i^K(X,n)\simeq \begin{cases}
                   0  \ \ & \text{if $i \not = 0$},\\
                   \mb{Z}/n  \ \ & \text{if $i = 0$}.
                 \end{cases}
\end{align*}
\end{conj}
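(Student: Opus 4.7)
The strategy is to separate Conjecture~\ref{KC} into the identification $H_0^K(X,n)\cong \mb{Z}/n$ and the vanishing $H_i^K(X,n)=0$ for $i\geq 1$, then attack the vanishing by induction on $d=\dim X$.

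For $i=0$, each closed point $x\in X_0$ has finite residue field, so $H^1(\kappa(x),\mb{Z}/n)\cong \mb{Z}/n$ canonically, while for $x\in X_1$ we have $H^2(\kappa(x),\mb{Z}/n(1))\cong \mathrm{Br}(\kappa(x))[n]$. The first differential in the Kato complex is the sum of local Brauer invariants along the one-dimensional closure $\overline{\{x\}}$. The Brauer--Hasse--Noether reciprocity on each such curve forces the cokernel to land in $\mb{Z}/n$ via the degree map; combined with the Kato--Saito reciprocity isomorphism $CH_0(X)/n\cong \pi_1^{ab}(X)/n$ and the exact sequence $0\to \pi_1^{\mathrm{geo}}(X)/n\to \pi_1^{ab}(X)/n\to G_k/n\to 0$, I would identify this cokernel as exactly $\mb{Z}/n$.

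For $i\geq 1$, the base case $d=1$ is classical global class field theory for function fields over $k$. For the inductive step I would pick a sufficiently generic smooth ample divisor $Y\subset X$ using Poonen's Bertini theorem over finite fields and analyse the resulting long exact sequence of Kato complexes relating $X$, $Y$ and $U:=X\setminus Y$. The terms involving $U$ should be controlled by an affine/weak-Lefschetz vanishing for Kato homology of smooth affine varieties, while the Bloch--Kato conjecture (Theorem~\ref{TBK}) identifies the local terms with Milnor $K$-theory mod $n$ and supplies the necessary functoriality. Singular intermediate objects appearing in the induction are to be attacked via de~Jong alterations combined with a trace argument to push the vanishing back to $X$.

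The hardest part, and the reason Conjecture~\ref{KC} remains a conjecture in full generality, is the uniform treatment of the prime-to-$p$ and $p$-primary parts, where $p=\mathrm{ch}(k)$. The prime-to-$p$ case follows the Jannsen--Saito line sketched above essentially unchanged. For the $p$-part, however, the logarithmic de Rham--Witt sheaves $W_r\Omega_{X,\log}^i$ behave badly under blow-ups and alterations, so the Bertini/dimension-reduction step collapses and must be replaced by a much more delicate motivic-cohomological argument in the spirit of the Kerz--Saito programme. I expect this is where essentially all the effort of a complete proof has to go.
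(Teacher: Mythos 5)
The statement you were asked about is labelled as a \emph{conjecture} in the paper (Kato's conjecture), and the paper offers no proof of it: it only records the known partial results that are actually used later, namely the case of curves (classical), the case of surfaces / degree $\leq 2$ with $\mb{Z}/n$ coefficients (Theorem \ref{TK1}, due to Colliot-Th\'el\`ene--Sansuc--Soul\'e and Kato), the $\mb{Q}_{\ell}/\mb{Z}_{\ell}$-coefficient statement in degree $\leq 3$ (Theorem \ref{TK2}), and the reduction from $\mb{Q}_{\ell}/\mb{Z}_{\ell}$ plus Bloch--Kato to $\mb{Z}/\ell^{\nu}$ coefficients. Everything in the paper that depends on Conjecture \ref{KC} only invokes degrees $\leq 2$, which is covered by these theorems. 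So there is no ``paper's own proof'' to compare against, and your text should not be read as one either.

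Your proposal is an honest research programme rather than a proof, and you say so yourself in the last paragraph; but that means it has genuine gaps at every load-bearing point. The $i=0$ computation is fine (it is essentially Brauer--Hasse--Noether on the curves $\overline{\{x\}}$, $x\in X_1$, together with connectedness of $X$; the appeal to Kato--Saito reciprocity is not needed). For $i\geq 1$, however, the ``affine/weak-Lefschetz vanishing for Kato homology of smooth affine varieties'' that you want to feed into the Bertini induction is precisely the hard content of the conjecture and cannot simply be invoked; in the actual Jannsen--Saito and Kerz--Saito work this is where resolution of singularities or alterations enter essentially, and the trace argument you mention breaks down for $\mb{Z}/n$ coefficients whenever the degree of the alteration is not prime to $n$. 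The $p$-primary part you explicitly leave open. None of these steps is filled in, so the proposal does not establish the statement --- which is consistent with the fact that the paper itself treats it as open and only uses the low-degree cases.
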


If $\dim X=1$, this is classical. This is an analogy of the following exact sequence for Brauer group of a number field $K$:
\begin{align*}
0\longrightarrow \mathrm{Br}(K) \longrightarrow \bigoplus_{v}\mathrm{Br}(K_v)\longrightarrow \mb{Q}/\mb{Z}\longrightarrow 0 .
\end{align*}

The following theorem is called the cohomological Hasse principle and shows Conjecture \ref{KC} is true for $\dim X=2$. Colliot-Th$\acute{\text{e}}$l$\grave{\text{e}}$ne$-$Sansuc$-$Soul$\acute{\text{e}}$ \cite{CTSS} (in the prime to $ch(k)$ case) and Kato \cite{K} (in the $ch(k)$-primary case) prove the theorem. 
\begin{thm}[Colliot-Th$\acute{\text{e}}$l$\grave{\text{e}}$ne$-$Sansuc$-$Soul$\acute{\text{e}}$, Kato]\label{TK1}
Let $X$ be a proper smooth irreducible surface over $k$ and $n$ be a natural number. Let $\eta $ be the generic point of $X$. Then Bloch$-$Ogus$-$Kato complex
\begin{align*}
0\rightarrow H^3(k(X) ,\mb{Z}/n(2))\rightarrow  \bigoplus_{x \in X_1} H^2(\kappa (x),\mb{Z}/n(1))\rightarrow  \bigoplus_{x \in X_0}H^1(\kappa (x),\mb{Z}/n) 
\end{align*}
is exact and the cokernel of the last map isomorphic to $\mb{Z}/n$.
\end{thm}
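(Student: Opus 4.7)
The plan is to identify the given Bloch--Ogus--Kato complex with the Kato complex $C^{1,0}(X,n)$ truncated in homological degrees $0\leq i\leq 2$, so that the assertion becomes Kato's Conjecture \ref{KC} for the smooth proper surface $X$: $H_i^K(X,n)=0$ for $i=1,2$ and $H_0^K(X,n)\simeq \mb{Z}/n$. Using the long exact sequences of Kato homology associated with
\begin{align*}
0\to \mb{Z}/\ell \to \mb{Z}/\ell^r \to \mb{Z}/\ell^{r-1}\to 0,
\end{align*}
one first reduces to the case $n=\ell$ prime, and then treats $\ell\neq p:=\mathrm{ch}(k)$ and $\ell=p$ separately.

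For $\ell\neq p$, the central input is the Bloch--Ogus theorem: the Kato complex $C^{1,0}(X,\ell)$ is the Cousin/Gersten resolution of the Zariski sheaf $\ml{H}^3(\mb{Z}/\ell(2))$ (the sheafification of $U\mapsto H^3_{\mathrm{et}}(U,\mb{Z}/\ell(2))$), whence $H_i^K(X,\ell)\simeq H^{2-i}_{\mathrm{Zar}}(X,\ml{H}^3(\mb{Z}/\ell(2)))$. One then feeds these into the coniveau spectral sequence
\begin{align*}
E_2^{p,q}=H^p_{\mathrm{Zar}}(X,\ml{H}^q(\mb{Z}/\ell(2)))\Longrightarrow H^{p+q}_{\mathrm{et}}(X,\mb{Z}/\ell(2)),
\end{align*}
whose abutment can be computed by the Hochschild--Serre sequence
\begin{align*}
0\to H^1(k,H^{i-1}(\overline{X},\mb{Z}/\ell(2)))\to H^i(X,\mb{Z}/\ell(2))\to H^i(\overline{X},\mb{Z}/\ell(2))^{G_k}\to 0
\end{align*}
combined with Poincar\'e duality on $\overline{X}$. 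Duality and the finiteness of $k$ give $H^4(\overline{X},\mb{Z}/\ell(2))\simeq \mb{Z}/\ell$ via the trace map, whence $H^5(X,\mb{Z}/\ell(2))\simeq H^1(k,\mb{Z}/\ell)\simeq \mb{Z}/\ell$. A careful tracking of the spectral sequence in total degrees $3,4,5$ extracts the required vanishings of $H_1^K,H_2^K$ and the identification $H_0^K(X,\ell)\simeq \mb{Z}/\ell$; the cokernel map in the statement is then visibly induced by the degree map on residue fields of closed points.

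The case $\ell=p$ proceeds along the same pattern after replacing $\mu_{p^r}^{\otimes j}$ by the logarithmic de Rham--Witt sheaf $W_r\Omega^j_{X,\log}$: the essential ingredients are the Gersten conjecture for logarithmic de Rham--Witt sheaves, the Bloch--Gabber--Kato description of $H^{j+1}(K,\mb{Z}/p^r(j))$ in terms of Milnor $K$-theory modulo $p^r$, and the corresponding duality statement on $\overline{X}$.

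I expect the main obstacle in the case $\ell\neq p$ to be the careful verification that the Kato complex is indeed the Cousin/Gersten resolution of $\ml{H}^3(\mb{Z}/\ell(2))$, together with the bookkeeping of the differentials in the Bloch--Ogus spectral sequence that is needed to isolate precisely the contribution $\mb{Z}/\ell$ at total degree $5$. In the $p$-primary case the technically deepest ingredient is the Gersten conjecture for the logarithmic de Rham--Witt sheaves, which is substantially harder than its $\ell\neq p$ counterpart and is the reason the two cases are historically treated by different authors.
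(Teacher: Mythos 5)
The paper does not prove this statement: Theorem \ref{TK1} is quoted as a known result, with the prime-to-$ch(k)$ case attributed to Colliot-Th\'el\`ene--Sansuc--Soul\'e \cite{CTSS} and the $ch(k)$-primary case to Kato \cite{K}, so there is no internal proof to compare against. Your sketch does follow the broad architecture of those original proofs: the identification of the displayed complex with the row $E_1^{*,3}$ of the coniveau spectral sequence, hence of $H_i^K(X,\ell)$ with $H^{2-i}_{\mathrm{Zar}}\bigl(X,\ml{H}^3(\mb{Z}/\ell(2))\bigr)$ via Bloch--Ogus, the computation $H^5(X,\mb{Z}/\ell(2))\simeq \mb{Z}/\ell$ by Poincar\'e duality and Hochschild--Serre (which correctly yields $H_0^K\simeq \mb{Z}/\ell$ and identifies the cokernel map with corestriction to $H^1(k,\mb{Z}/\ell)$), and the replacement of $\mu_{p^r}^{\otimes j}$ by $W_r\Omega^j_{\log}$ in the $p$-primary case.

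However, the decisive step is missing, and it is not ``bookkeeping of differentials.'' The vanishing of $H_1^K$ and $H_2^K$, i.e.\ of $H^1_{\mathrm{Zar}}(X,\ml{H}^3)$ and $H^0_{\mathrm{Zar}}(X,\ml{H}^3)$, does not follow from the spectral sequence plus duality alone: in total degree $4$ one finds that $E_\infty^{1,3}$ is the quotient of $H^4(X,\mb{Z}/\ell(2))$ by the image of $E_\infty^{2,2}$, so $H_1^K=0$ is equivalent to the surjectivity of the cycle class map $CH^2(X)/\ell \to H^4(X,\mb{Z}/\ell(2))$; and in total degree $3$ one must control $E_2^{1,2}$, $E_2^{2,1}$ and the differential $d_2\colon E_2^{0,3}\to E_2^{2,2}$. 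These are themselves the main theorems of \cite{CTSS}, and their proofs require inputs you never invoke in the $\ell\neq ch(k)$ case: the Merkur'ev--Suslin theorem (to resolve $\ml{H}^3$ through $K_2$-cohomology and to identify $E_2^{2,2}$ with $CH^2(X)/\ell$) and weight arguments resting on Deligne's proof of the Weil conjectures (to kill the relevant pieces of $H^3(\overline{X},\mb{Z}/\ell(2))$ and $H^4(\overline{X},\mb{Z}/\ell(2))$). Your reduction to prime coefficients also silently uses Merkur'ev--Suslin, since the termwise exactness of $0\to C^{1,0}(X,\ell)\to C^{1,0}(X,\ell^r)\to C^{1,0}(X,\ell^{r-1})\to 0$ needs surjectivity of $H^{i+1}(\kappa(x),\mb{Z}/\ell^r(i))\to H^{i+1}(\kappa(x),\mb{Z}/\ell^{r-1}(i))$. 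As it stands the proposal is an accurate table of contents for the known proof rather than a proof; within this paper the correct move is simply to cite \cite{CTSS} and \cite{K}, as the author does.
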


We know the following theorem on the Kato Conjecture replaced coefficient $\mb{Z}/n$ by $\mb{Q}_{\ell }/\mb{Z}_{\ell }$ for a prime number $\ell $, which is proved by Colliot-Th$\acute{\text{e}}$l$\grave{\text{e}}$ne \cite{CT} in the case $\ell $ prime to $ch(k)$, and by Suwa \cite{Sw} in the case $\ell $ is a power of $ch(k)$.
\begin{thm}[Colliot-Th$\acute{\text{e}}$l$\grave{\text{e}}$ne, Suwa]\label{TK2}
For any prime number $\ell $, $\mb{Q}_{\ell }/\mb{Z}_{\ell }$-coefficient Kato conjecture holds true for degree $i\leq 3$.
\end{thm}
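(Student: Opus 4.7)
The strategy is to set up a Bloch-Ogus coniveau spectral sequence whose $E_1$-row recovers the Bloch-Ogus-Kato complex with $\mb{Q}_{\ell }/\mb{Z}_{\ell }$ coefficients, and then use weight arguments from Deligne's Weil II (for $\ell \neq \mathrm{ch}(k)$) or slope arguments from Illusie and Gros-Suwa (for $\ell =\mathrm{ch}(k)$) to force the identification of the relevant Kato homology with a piece of \'etale cohomology that one can compute directly. The proof splits naturally into the two cases.

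For $\ell \neq \mathrm{ch}(k)$, I would consider the niveau spectral sequence attached to a smooth projective variety $X$ of dimension $d$ over $k$:
\begin{align*}
E_1^{a,b} = \bigoplus _{x\in X^a} H^{b-a}(\kappa (x),\mb{Q}_{\ell }/\mb{Z}_{\ell }(d-a)) \Longrightarrow H^{a+b}(X,\mb{Q}_{\ell }/\mb{Z}_{\ell }(d)).
\end{align*}
By the Bloch-Ogus theorem, the $d_1$-differentials along the row $b=d+1$ coincide (up to sign) with the boundary maps of the Kato complex $C^{1,0}(X,\ell ^{\infty })$, so $E_2^{d-i,d+1}$ computes $H_i^K(X,\mb{Q}_{\ell }/\mb{Z}_{\ell })$. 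Because $X$ is smooth and proper over a finite field, Deligne's Weil II pins down the weights of the abutment, while the $E_1$-entries, being cohomology of finite residue fields with twisted coefficients, are themselves pure of a different, predictable weight. A careful weight comparison shows that higher differentials into or out of the Kato row must vanish, and that the corresponding weight piece of the abutment equals $\mb{Q}_{\ell }/\mb{Z}_{\ell }$ for $i=0$ (via Poincar\'e duality and Kato-Saito unramified class field theory, compatibly with Theorem \ref{TK1}) and vanishes for $1\leq i\leq 3$.

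For $\ell =\mathrm{ch}(k)$, the role of the \'etale sheaf $\mu _{\ell ^r}^{\otimes j}$ is played by the logarithmic de Rham-Witt sheaves $W_r\Omega _{X,\log }^j$ that enter the definition of $\mb{Z}/\ell ^r(j)$ recalled in (0.5). Using the purity theorem of Gros-Suwa for $W_r\Omega _{\log }^{\bullet }$ together with Illusie's theory of Frobenius slopes on the de Rham-Witt complex, I would construct a coniveau-type spectral sequence whose $E_1$-row reproduces the $\ell $-primary part of the Kato complex and whose abutment is controlled by slope estimates parallel to Deligne's weight estimates. Suwa's analysis then transfers the weight comparison of the previous paragraph to the slope filtration, producing the same vanishing range $1\leq i\leq 3$.

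The main obstacle in both cases is weight/slope bookkeeping: one must exclude higher differentials $d_r$ ($r\geq 2$) crossing from the Kato row into (or out of) pieces of a different weight, and identify explicitly the subquotient of the abutment that inherits from $H_i^K$. A crucial input is the Bloch-Kato conjecture $h_{k,n}^i$ in degree $i\leq 2$ (Theorem \ref{TBK}), which identifies the Galois cohomology of function fields appearing in the spectral sequence with Milnor $K$-theory in the needed range and supplies the required weight/slope information. The restriction to $i\leq 3$ arises precisely because the analogous identification fails in higher Kato degrees: the corresponding entries would require Bloch-Kato in degrees $\geq 3$, which was not available at the time of \cite{CT}, \cite{Sw}.
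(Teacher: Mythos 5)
The paper offers no proof of Theorem \ref{TK2}: it is imported as a known external result, attributed to Colliot-Th\'el\`ene \cite{CT} for $\ell$ prime to $\mathrm{ch}(k)$ and to Suwa \cite{Sw} for $\ell$ a power of $\mathrm{ch}(k)$, so there is no internal argument to compare yours against. Judged on its own terms, your outline points in the right general direction --- the cited proofs do run through a niveau spectral sequence whose $q=0$ row is the Kato complex (exactly the mechanism the paper records in Proposition \ref{P2}), with Deligne's Weil II driving the $\ell\neq\mathrm{ch}(k)$ case and the logarithmic de Rham--Witt/Gersten machinery of Illusie, Gros and Suwa driving the $p$-primary case --- but it is a plan rather than a proof, and two of its specific assertions are wrong.

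First, the $E_1$-entries are not ``cohomology of finite residue fields'': for $x\in X_a$ with $a\geq 1$ the residue field $\kappa(x)$ is a function field of transcendence degree $a$ over $k$, and its Galois cohomology is not pure of any weight. The weight argument in \cite{CT} is applied to the abutment, i.e.\ to $H^{*}(\overline{X},\mb{Q}_{\ell}(d))$ via the Hochschild--Serre spectral sequence and Weil II, in combination with the vanishing $E^1_{p,q}=0$ for $q<0$ coming from cohomological-dimension bounds for such function fields; it is this combination, not purity of the $E_1$-terms, that controls the differentials touching the Kato row. Second, you have the role of the Bloch--Kato conjecture backwards: as the paragraph following Theorem \ref{TK2} in the paper states, Bloch--Kato in degrees $\leq m$ (Theorem \ref{TBK}) is what upgrades the $\mb{Q}_{\ell}/\mb{Z}_{\ell}$-coefficient statement to the $\mb{Z}/\ell^{\nu}$-coefficient statement; it is not the source of the restriction to $i\leq 3$ in the divisible-coefficient theorem, which instead reflects the limits of the weight/coniveau argument itself in low degrees. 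As written, your sketch would therefore need both of these points repaired, and in any case the substance of the argument lives in \cite{CT} and \cite{Sw}, not in this paper.
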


Bloch$-$Kato conjecture (Conjecture \ref{BKC}) relates to $\mb{Z}/{\ell }^{\nu }$-coefficient Kato conjecture and $\mb{Q}_{\ell }/\mb{Z}_{\ell }$-coefficient Kato conjecture as follows: If $\mb{Q}_{\ell }/\mb{Z}_{\ell }$- coefficient Kato conjecture holds true for degree $i\leq m$ and Bloch$-$Kato conjecture holds true for degree $i\leq m$, then  $\mb{Z}/{\ell }^{\nu }$-coefficient Kato conjecture holds true for degree $i\leq m$.\bigskip

Jannsen and Saito \cite{JS} proved the following theorem which shows a relation between homology groups of a dual graph and Kato homology groups.
\begin{thm}[Jannsen$-$Saito]\label{TJS}
Let $X$ be a $d$ dimensional simple normal crossing variety over $k$. Let $\ell $ be a prime number and $\nu $ be a natural number. Then there is a canonical homomorphism 
\begin{align*}
\gamma _a : H_a^K(X,\ell ^{\nu })\longrightarrow H_a(\Gamma _X,\mb{Z}/\ell ^{\nu }).
\end{align*}
Further, if Conjecture $\ref{KC}$ holds true for degree $\leq m$, then $\gamma _a$ is isomorphism for any $a\leq m$. 
\end{thm}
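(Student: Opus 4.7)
The plan is to derive the theorem from a spectral sequence whose $E_1$-page is built out of the Kato homologies of the smooth projective strata $X^{(r)}$ and which converges to $H^K_{\bullet}(X,\ell^{\nu})$. Under the hypothesis on Conjecture~\ref{KC}, Kato's conjecture applied stratum by stratum will collapse all but one row of this $E_1$-page, and that surviving row will turn out to be the simplicial chain complex of $\Gamma_X$; the comparison map $\gamma_a$ then appears as the edge map of the spectral sequence.

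Concretely, every scheme-theoretic point of $X$ lies in a unique smallest stratum, namely a component of some $X^{(r)}$, and I would filter the Bloch--Ogus--Kato complex $C^{1,0}(X,\ell^{\nu})$ descendingly by ``depth of stratum.'' Using absolute purity for the smooth closed immersions $X^{(r+1)}\hookrightarrow X^{(r)}$, each graded piece of this filtration can be identified, after a suitable re-indexing, with the full Bloch--Ogus--Kato complex of the smooth projective variety $X^{(r)}$. This produces a first-quadrant spectral sequence of the form
\[
E^{1}_{r,q} \;=\; H_{q}^{K}\bigl(X^{(r)},\ell^{\nu}\bigr) \;\Longrightarrow\; H_{r+q-1}^{K}(X,\ell^{\nu}),
\]
whose $d^{1}$-differentials are induced by the residue maps of the Kato complex along the divisorial inclusions $X^{(r+1)}\hookrightarrow X^{(r)}$.

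Next I would identify the bottom row $q=0$: each $X^{(r)}$ is a disjoint union of smooth projective varieties over $k$, one summand per $r$-simplex of $\Gamma_X$ in the sense of Definition~\ref{d2}, and for every such component $Z$ the elementary case of Kato's conjecture gives $H^{K}_{0}(Z,\ell^{\nu})\cong \mathbb{Z}/\ell^{\nu}$. A local computation at the generic point of a component of $X^{(r+1)}$ lying in a component of $X^{(r)}$ shows that, up to the signs dictated by the fixed ordering on $I$, the induced $d^{1}$ on this row is the alternating-sum face map of the simplicial complex $\Gamma_X$. Consequently the bottom row of the $E_2$-page is $H_{\bullet}(\Gamma_X,\mathbb{Z}/\ell^{\nu})$, and the natural edge map of the spectral sequence furnishes the canonical homomorphism $\gamma_a$; this construction is unconditional.

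Finally, assume Conjecture~\ref{KC} in degrees $\leq m$. Then for every component of every $X^{(r)}$ one has $H^{K}_{q}(X^{(r)},\ell^{\nu})=0$ for all $0<q\leq m$, so the rows with $0<q\leq m$ of $E_1$ vanish identically. The spectral sequence degenerates onto its bottom row in total degree $\leq m$, and the edge map $\gamma_a$ is an isomorphism for every $a\leq m$. I expect the principal obstacle to be the identification carried out in the previous paragraph: one must show that the filtration of $C^{1,0}(X,\ell^{\nu})$ has exactly the asserted graded pieces, and that the bottom-row $d^{1}$ really is the simplicial boundary of $\Gamma_X$. This requires a careful local analysis of the residue maps $H^{i+1}(\kappa(x),\mathbb{Z}/\ell^{\nu}(i))\to H^{i}(\kappa(y),\mathbb{Z}/\ell^{\nu}(i-1))$ at pairs of generic points of adjacent strata, together with verification that the resulting signs match those induced by the ordering on $I$; once this is in place the remainder of the proof is formal.
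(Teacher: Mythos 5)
The paper itself gives no proof of this theorem---it is quoted from Jannsen--Saito \cite{JS}---so I am measuring your proposal against their argument. Your overall strategy is the right one and is essentially theirs: a spectral sequence whose $E^1$-page consists of the Kato homologies of the strata $X^{(r)}$, whose bottom row is the simplicial chain complex of $\Gamma_X$, with $\gamma_a$ arising as an edge map and the rows $0<q\leq m$ killed by Conjecture \ref{KC}. The gap is in how you build the spectral sequence. If you filter $C^{1,0}(X,\ell^{\nu})$ by depth of stratum, the graded piece in depth $r$ is spanned by the points lying on \emph{exactly} $r$ components, i.e.\ it is the Kato complex of the \emph{open} stratum $X^{(r)}\setminus X^{(r+1)}$, not of the projective smooth variety $X^{(r)}$: a point of depth $s>r$ occurs in $X^{(r)}$ with multiplicity $\binom{s}{r}$ but does not occur in that graded piece at all. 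No appeal to absolute purity repairs this---the two complexes have different underlying groups---and Conjecture \ref{KC}, which concerns projective smooth varieties, says nothing about $H^{K}_{q}$ of these open strata; you would need the localization sequences and a further induction to get back to the closed strata. A symptom of the conflation is your abutment $H^{K}_{r+q-1}(X)$: that indexing belongs to the resolution spectral sequence described below, whereas a filtration of $C^{1,0}(X,\ell^{\nu})$ by subcomplexes preserves total degree.

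The correct construction, and the one in \cite{JS}, is not a filtration of $C^{1,0}(X,\ell^{\nu})$ but a resolution of it: the proper pushforwards along $X^{(r)}\to X^{(r-1)}$, with signs dictated by the ordering on $I$, assemble into a sequence of complexes
\begin{align*}
0 \longrightarrow C^{1,0}(X^{(d+1)},\ell^{\nu}) \longrightarrow \cdots \longrightarrow C^{1,0}(X^{(2)},\ell^{\nu}) \longrightarrow C^{1,0}(X^{(1)},\ell^{\nu}) \longrightarrow C^{1,0}(X,\ell^{\nu}) \longrightarrow 0,
\end{align*}
which is exact because over a fixed point $x\in X$ of depth $s$ it is the augmented chain complex of the $(s-1)$-simplex tensored with $H^{*}(\kappa(x),\cdot)$, hence acyclic. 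The resulting hyperhomology spectral sequence $E^{1}_{p,q}=H^{K}_{q}(X^{(p+1)},\ell^{\nu})\Rightarrow H^{K}_{p+q}(X,\ell^{\nu})$ has exactly the shape you want, and from that point on your argument---identification of the bottom $d^{1}$ with the simplicial boundary of $\Gamma_X$, the edge map defining $\gamma_a$ unconditionally, and the collapse onto the bottom row in total degree $\leq m$ under Conjecture \ref{KC}---goes through as you describe.
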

\subsection{$\acute{\text{E}}$tale homology theory}

We here define $\acute{\text{e}}$tale homology and compute a spectral sequence associated with that homology.

For a separated scheme $X$ of finite type over $k$ and a natural number $n$, we define the $\acute{\text{e}}$tale homology with coefficient $\mb{Z}/n$ to
\begin{align*}
H_i(X,\mb{Z}/n):= \mathrm{Hom}\bigl( H_c^i(X, \mb{Z}/n),\mb{Q}/\mb{Z} \bigr).
\end{align*}
Here $H_c^i(-,-)$ denotes an $\acute{\text{e}}$tale cohomology group with compact support. This functor $H_*(-,\mb{Z}/n)$ forms a homology theory on the category of separated schemes of finite type over $k$ and proper $k$-morphisms. By Bloch$-$Ogus \cite{BO}, we have a niveau spectral sequence 
\begin{align}\label{a4}
E_{p,q}^1=\bigoplus_{x \in X_p}H_{p+q}(x ,\mb{Z}/n)\Longrightarrow H_{p+q}(X,\mb{Z}/n).
\end{align}
Here for $x \in X$, we define
\begin{align*}
H_i(x ,\mb{Z}/n):=\lim_{\longrightarrow }H_i(U,\mb{Z}/n),
\end{align*}
and the limit is taken over all nonempty $U$ which is open in the closure $\overline{ \{ x \}}$ of $x$ in $X$.\bigskip

This homology theory satisfies the Poincar$\acute{\text{e}}$ duality:
\begin{lem}\label{L4}
Let $V$ be an connected smooth variety over $k$ with $\dim=d$. Let $n$ be a natural number. Then we have the following isomorphism:
\begin{align*}
H_m(V,\mb{Z}/n)\simeq H^{2d+1-m}(V, \mb{Z}/n(d)).
\end{align*}
\end{lem}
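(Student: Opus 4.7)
The statement is absolute Poincar\'e duality for smooth varieties over the finite field $k$: it asserts that $\mb{Z}/n(d)[2d+1]$ serves as a dualizing complex on $V$. Using the definition $H_m(V,\mb{Z}/n) = \mathrm{Hom}(H^m_c(V,\mb{Z}/n),\mb{Q}/\mb{Z})$ and the fact that $H^m_c(V,\mb{Z}/n)$ is finite and killed by $n$, the claim is equivalent to producing a perfect pairing
\begin{align*}
H^{2d+1-m}(V,\mb{Z}/n(d))\times H^m_c(V,\mb{Z}/n)\longrightarrow \mb{Z}/n.
\end{align*}
The plan is to reduce to the case of a prime power and then split into an $\ell \neq p$ part and an $\ell = p$ part, where $p=\mathrm{ch}(k)$. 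Writing $n=mp^r$ with $(m,p)=1$, the coefficient definition in (0.5) gives a direct sum decomposition $\mb{Z}/n(d)=\mb{Z}/m(d)\oplus W_r\Omega_{V,\log}^d[-d]$, and compact support cohomology splits compatibly, so the two cases are independent.

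For the prime-to-$p$ part I would combine two standard ingredients. First, for the smooth variety $\overline{V}$ over the separably closed field $k^{sep}$, SGA~4 Exp.~XVIII supplies perfect pairings of finite $G_k$-modules
\begin{align*}
H^i(\overline{V},\mb{Z}/m(d))\times H^{2d-i}_c(\overline{V},\mb{Z}/m)\longrightarrow \mb{Z}/m.
\end{align*}
Second, since $k$ is finite one has $\mathrm{cd}(k)=1$, so the Hochschild--Serre spectral sequences for $\overline{V}\to V$ applied to $\mb{Z}/m(d)$ (for ordinary cohomology) and to $\mb{Z}/m$ (for $H^*_c$) collapse into two-term short exact sequences involving only $H^0(k,-)$ and $H^1(k,-)$. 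Local Tate duality for the finite field $k$ identifies $H^1(k,A)$ with the Pontryagin dual of $H^0(k,A^\vee)$ for any finite $G_k$-module $A$. A diagram chase combining these ingredients upgrades the geometric duality over $k^{sep}$ into an absolute duality over $k$ with the characteristic shift by $+1$ in the degree.

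For the $p$-primary part I would invoke the duality for the logarithmic de Rham--Witt sheaves due to Milne and Moser (and used in Jannsen--Saito): there is a trace pairing $W_r\Omega_{V,\log}^i\times W_r\Omega_{V,\log}^{d-i}\to W_r\Omega_{V,\log}^d$ and an accompanying trace $H^{d+1}_c(V,W_r\Omega_{V,\log}^d)\to \mb{Z}/p^r$ inducing a perfect pairing between $H^j_c(V,W_r\Omega_{V,\log}^i)$ and $H^{d+1-j}(V,W_r\Omega_{V,\log}^{d-i})$. Specialized to $i=0$ this says exactly that $H^{m}_c(V,\mb{Z}/p^r)=H^m_c(V,W_r\Omega_{V,\log}^0)$ is Pontryagin dual to $H^{d+1-m}(V,W_r\Omega_{V,\log}^d)=H^{2d+1-m}(V,\mb{Z}/p^r(d))$, which is the required identity.

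The main obstacle is the $p$-primary case: the prime-to-$p$ argument is essentially a routine combination of SGA~4 duality with Hochschild--Serre and local Tate duality over the finite field, while the $p$-primary case genuinely needs the more delicate Milne--Moser duality for $W_r\Omega_{\log}^\bullet$. Once both pieces are in place, assembling them along the prime decomposition of $n$ is formal and yields the stated isomorphism $H_m(V,\mb{Z}/n)\simeq H^{2d+1-m}(V,\mb{Z}/n(d))$.
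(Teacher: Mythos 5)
Your proposal is correct and follows essentially the same route as the paper, which disposes of the lemma in two lines: for $n$ prime to $\mathrm{ch}(k)$ it combines Poincar\'e duality for $\overline{V}$ (citing \cite{Mi}) with the duality for Galois cohomology of the finite field $k$, and for the $p$-primary part it cites the \'etale duality of Jannsen--Saito--Sato \cite{JSS}, which rests on the same Milne--Moser duality for logarithmic de Rham--Witt sheaves that you invoke. The only cosmetic difference is that you spell out the Hochschild--Serre reduction and name the $p$-part ingredients explicitly, whereas the paper delegates both steps to the references.
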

\begin{proof}
If $n$ prime to $ch(k)$, this isomorphism follows from the Poincar$\acute{\text{e}}$ duality \cite{Mi} for $\overline{V}$ and the duality for Galois cohomology of $k$. 

If $n$ is a power of $ch(k)$, this is proved in \cite{JSS}. 
\end{proof}

The following proposition follows from Lemma \ref{L4} and the above spectral sequence \eqref{a4} (cf. \cite[Prop. 2.4]{MSA}).
\begin{prop}\label{P2}
For a variety $X$ over $k$ and any $n \in \mb{N}$, we have a spectral sequence
\begin{align}\label{P2-ss}
E_{p,q}^1(X,n):=\bigoplus_{x \in X_p}H^{p-q+1}(\kappa (x),\mb{Z}/n(p))\Longrightarrow H_{p+q}(X,\mb{Z}/n).
\end{align}
If $p<0$ or $q<0$, then $E_{p,q}^1=0$. Further, $E^1$-terms are Bloch$-$Ogus$-$Kato complexes. 
\end{prop}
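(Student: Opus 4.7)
The plan is to rewrite the niveau spectral sequence \eqref{a4} by applying the Poincar\'e duality of Lemma \ref{L4} to each $E^1$-term. For a point $x \in X_p$, the reduced closure $Z := \overline{\{x\}}$ is an integral $k$-scheme of dimension $p$, so $\kappa(x)$ has transcendence degree $p$ over $k$; since $k$ is perfect, the smooth locus of $Z$ is a dense open subset, and the smooth open subschemes $U \subset Z$ form a cofinal family among the nonempty opens entering the definition of $H_m(x,\mb{Z}/n)$. Lemma \ref{L4} gives, for such a $U$,
\begin{align*}
H_m(U,\mb{Z}/n) \simeq H^{2p+1-m}(U,\mb{Z}/n(p)),
\end{align*}
and passing to the direct limit, via the standard continuity of \'etale cohomology, one obtains
\begin{align*}
H_m(x,\mb{Z}/n) \simeq H^{2p+1-m}(\kappa(x),\mb{Z}/n(p)).
\end{align*}
Setting $m=p+q$ yields the announced description of $E^1_{p,q}(X,n)$.

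The vanishing outside the first quadrant is then immediate: if $p<0$ we have $X_p=\emptyset$, while if $q<0$ then $p-q+1\geq p+2$ and $\kappa(x)$, being finitely generated of transcendence degree $p$ over the finite field $k$, has cohomological dimension at most $p+1$, so $H^{p-q+1}(\kappa(x),\mb{Z}/n(p))=0$.

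It remains to identify the $d^1$-differential of the niveau spectral sequence with the Bloch--Ogus--Kato boundary map, i.e.\ to show that for each $y\in X_{p-1}$ lying in the closure of $x\in X_p$ the component
\begin{align*}
H^{p-q+1}(\kappa(x),\mb{Z}/n(p)) \longrightarrow H^{p-q}(\kappa(y),\mb{Z}/n(p-1))
\end{align*}
agrees with the residue map associated to the discrete valuation on $\kappa(x)$ determined by $\ml{O}_{\overline{\{x\}},y}$. This is the main technical step. It is handled by the usual compatibility of the localization sequence in \'etale cohomology with the Galois-symbol residue, which reduces via Poincar\'e duality and functoriality to the case of the spectrum of a discrete valuation ring (cf.\ \cite{BO}). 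Once this identification is made, the row of $E^1$ at fixed $q$ is precisely the Kato complex $C^{1-q,0}(X,n)$ of Definition \ref{d3}, finishing the proof. The argument is entirely parallel to \cite[Prop.~2.4]{MSA}, the only difference being the arbitrary dimension of $X$.
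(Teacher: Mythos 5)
Your proposal is correct and follows exactly the route the paper intends: it combines the niveau spectral sequence \eqref{a4} with the Poincar\'e duality of Lemma \ref{L4} applied to smooth dense opens of $\overline{\{x\}}$ (cofinal since $k$ is perfect), and defers the identification of $d^1$ with the Kato residue to the standard Bloch--Ogus compatibility, just as the paper does by citing \cite[Prop.~2.4]{MSA}. The only imprecision is minor: for the part of $n$ divisible by $\mathrm{ch}(k)$ the vanishing for $q<0$ should be phrased via the degree range of $W_r\Omega^p_{\log}$ in the definition of $\mb{Z}/n(p)$ rather than via cohomological dimension, but the conclusion is the same.
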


$E_{p,q}^1(X)$ denotes $E_{p,q}^1(X,n)$ below.

\begin{prop}\label{P2-2}
Let $X$ be a simple normal crossing variety over $k$ which is proper over $k$. For any integer $n>1$, there is an exact sequence
\begin{align*}
\xymatrix{
H_2(\Gamma _X, \mb{Z}/n)\ar[r]^{\epsilon _{X,n}} &CH_0(X)/n \ar[r]^{\rho_ X/n}&\pi_1^{ab}(X)/n \ar[r] &H_1(\Gamma _X, \mb{Z}/n) \ar[r] &0.
}
\end{align*}
\end{prop}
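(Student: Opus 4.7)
The plan is to derive the sequence by reading off the niveau spectral sequence of Proposition \ref{P2} at total degree $\leq 1$, then replacing Kato homology by dual-graph homology via Theorem \ref{TJS}.

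First I would compute the relevant portion of the $E^2$-page. By construction the row $q = 0$ is the Kato complex $C^{1,0}(X,n)$, so $E_{p,0}^2 = H_p^K(X, n)$. The row $q = 1$ reads
\begin{align*}
E_{p,1}^1 = \bigoplus_{x \in X_p} H^p\bigl(\kappa (x), \mb{Z}/n(p)\bigr),
\end{align*}
and Kummer theory identifies its leftmost differential $E_{1,1}^1 \to E_{0,1}^1$ with the divisor map $\bigoplus_{x \in X_1} \kappa (x)^\times/n \to \bigoplus_{x \in X_0} \mb{Z}/n$ from (0.4), giving $E_{0,1}^2 = CH_0(X)/n$. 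Since $E_{p,q}^1 = 0$ for $p < 0$ or $q < 0$, the only higher differential touching the region $p + q \leq 1$ is $d_2 \colon E_{2,0}^2 \to E_{0,1}^2$, i.e.\ $H_2^K(X, n) \to CH_0(X)/n$; hence $E_{0,1}^\infty = \mathrm{Coker}\bigl(H_2^K(X, n) \to CH_0(X)/n\bigr)$ and $E_{1,0}^\infty = H_1^K(X, n)$.

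Next I would identify the abutment. Since $X$ is proper, $H_c^1(X, \mb{Z}/n) = H^1(X, \mb{Z}/n) = \mathrm{Hom}\bigl(\pi_1^{ab}(X), \mb{Z}/n\bigr)$, so Pontryagin duality gives $H_1(X, \mb{Z}/n) \cong \pi_1^{ab}(X)/n$. The associated graded of this filtration then yields
\begin{align*}
0 \longrightarrow E_{0,1}^\infty \longrightarrow \pi_1^{ab}(X)/n \longrightarrow E_{1,0}^\infty \longrightarrow 0,
\end{align*}
which splices with $H_2^K(X, n) \to CH_0(X)/n \twoheadrightarrow E_{0,1}^\infty$ into
\begin{align*}
H_2^K(X, n) \longrightarrow CH_0(X)/n \longrightarrow \pi_1^{ab}(X)/n \longrightarrow H_1^K(X, n) \longrightarrow 0.
\end{align*}
Replacing $H_a^K(X, n)$ by $H_a(\Gamma_X, \mb{Z}/n)$ for $a = 1, 2$ is Theorem \ref{TJS}, whose hypothesis is the degree-$\leq 2$ case of Conjecture \ref{KC}; that case follows by combining Theorem \ref{TK2} with the $i \leq 2$ part of Theorem \ref{TBK} (after decomposing $n$ into prime-power factors).

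The hard part will be verifying that the map $CH_0(X)/n \to \pi_1^{ab}(X)/n$ produced by the spectral sequence agrees with $\rho_X/n$. In the smooth case this is the content of Kato$-$Saito's construction of the reciprocity map through Bloch$-$Ogus duality; for a general simple normal crossing $X$ I would deduce it from the smooth case via the functoriality of the niveau spectral sequence under the closed immersions of the components $X_i \hookrightarrow X$, together with the compatibility of the reciprocity maps on $X$ and on each $X_i$.
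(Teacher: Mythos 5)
Your proposal is correct and follows essentially the same route as the paper: read off the low-degree terms of the niveau spectral sequence of Proposition \ref{P2}, identify $E^2_{0,1}$ with $CH_0(X)/n$ and the abutment $H_1(X,\mb{Z}/n)$ with $\pi_1^{ab}(X)/n$, and replace the Kato homology groups $H^K_a(X,n)$ ($a=1,2$) by $H_a(\Gamma_X,\mb{Z}/n)$ via Theorem \ref{TJS} after deducing Conjecture \ref{KC} in degrees $\leq 2$ from Theorems \ref{TBK}, \ref{TK1} and \ref{TK2}. The only difference is that you flag the identification of the edge map with $\rho_X/n$ as needing an argument, whereas the paper simply asserts the corresponding commutative square from the definitions.
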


\begin{proof}
From the spectral sequence \eqref{P2-ss} for X, we obtain the following exact sequence
\begin{align*}
\xymatrix{
E^2_{2,0}(X)\ar[r] &E^2_{0,1}(X) \ar[r]^{d_{0,1}}&H_1(X,\mb{Z}/n) \ar[r] &E^2_{1,0}(X) \ar[r] &0.
}
\end{align*}
By Theorem \ref{TBK}, \ref{TK1} and \ref{TK2}, Conjecture \ref{KC} holds true for degree $i\leq 2$. Therefore, from Theorem \ref{TJS}, we obtain  isomorphisms
\begin{align*}
&E^2_{2,0}(X) = H^K_2(X,n) \simeq H_2(\Gamma _X, \mb{Z}/n),\\
&E^2_{1,0}(X) = H^K_1(X,n) \simeq H_1(\Gamma _X, \mb{Z}/n).
\end{align*}
On the other hand, there is a commutative diagram
\begin{align*}
\xymatrix{
E^2_{0,1}(X) \ar[d]^{\simeq }\ar[r]^{d_{0,1}}&H_1(X,\mb{Z}/n)\ar[d]^{\simeq }\\
CH_0(X)/n\ar[r]^{\rho_X/n} &\pi_1^{ab}(X)/n.
}
\end{align*}
Here the vertical isomorphisms follow from the definition of $CH_0(X)$ and $H^1(X,\mb{Z}/n)=\mathrm{Hom}\bigl( \pi_1^{ab}(X), \mb{Z}/n \bigr)$. Hence the proposition follows. 
\end{proof}
\section{Main result}

In this section, we construct the map $\delta _Y$ of introduction and prove Theorem \ref{IT}. Let $Y_0$ be a projective smooth and geometrically irreducible variety over $k$ and let $D$ be a simple normal crossing divisor on $Y_0$. We then consider the following simple normal crossing variety 
\begin{align*}
Y := \bigl( Y_0\times _kO \bigr) \cup \bigl( Y_0\times _k{\infty }\bigr) \cup \bigl( D\times _k\mb{P}^1_k\bigr) \ \ \subset Y_0\times _k\mb{P}^1_k.
\end{align*}
Here $O:=(0:1)$, $\infty :=(1:0)\in \mb{P}^1_k$.
\subsection{Construction of $\delta _Y$}
First, we prove the following proposition on an important homomorphism in study on $\mathrm{Ker}(\rho_Y)$.
\begin{prop}\label{P3}
There exists a homomorphism 
\begin{align*}
\delta _Y : H_1(\Gamma _D,\mb{Z})\longrightarrow CH_0(Y)
\end{align*}
whose image coincides with $\mathrm{Ker}(\rho _Y)$.
\end{prop}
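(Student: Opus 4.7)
The plan is to identify $\Gamma_Y$ with the unreduced suspension $\Sigma \Gamma_D$, apply the mod-$n$ exact sequence of Proposition \ref{P2-2}, and then lift the resulting mod-$n$ maps to integer coefficients by exploiting the finiteness of $A_0(Y)$ given by Lemma \ref{L1}.

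First I would identify the dual graph. The irreducible components of $Y$ are $Y_0\times O$, $Y_0\times \infty$, and $D_i\times\mb{P}^1$ (for each irreducible component $D_i$ of $D$); inspecting higher intersections shows that $\Gamma_Y$ is the simplicial join $\{O,\infty\}*\Gamma_D$, i.e.\ the unreduced suspension $\Sigma\Gamma_D$. Thus $H_2(\Gamma_Y,\mb{Z})\simeq H_1(\Gamma_D,\mb{Z})$, and $H_1(\Gamma_Y,\mb{Z})\simeq \widetilde H_0(\Gamma_D,\mb{Z})$ is torsion-free, so universal coefficients give $H_2(\Gamma_Y,\mb{Z}/n)\simeq H_1(\Gamma_D,\mb{Z})/n$ functorially in $n$. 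Substituting into Proposition \ref{P2-2} yields a family, compatible in $n$, of maps $\tilde\epsilon_n : H_1(\Gamma_D,\mb{Z})/n \to CH_0(Y)/n$ with $\mathrm{Im}(\tilde\epsilon_n)=\mathrm{Ker}(\rho_Y/n)$.

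Next I would pass to integer coefficients. Compatibility of $\rho_Y$ with the degree maps $CH_0(Y)\to \mb{Z}$ and $\pi_1^{ab}(Y)\to G_k=\hat{\mb{Z}}$, together with the injection $\mb{Z}\hookrightarrow \hat{\mb{Z}}$, forces $\mathrm{Ker}(\rho_Y)\subseteq A_0(Y)$; this is finite by Lemma \ref{L1}. The profinite group $\pi_1^{geo}(Y)$ satisfies $\bigcap_m m\pi_1^{geo}(Y)=0$, and the sequence $0\to A_0(Y)\to CH_0(Y)\to \mathrm{Im}(\mathrm{deg})\to 0$ splits (its cokernel being free). Choosing $n$ sufficiently divisible so that $n\cdot A_0(Y)=0$ and $\rho_Y^{geo}(A_0(Y))\cap n\pi_1^{geo}(Y)=0$, a diagram chase shows the natural map $\mathrm{Ker}(\rho_Y)\to \mathrm{Ker}(\rho_Y/n)$ is an isomorphism. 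I would then define
\begin{align*}
\delta_Y : H_1(\Gamma_D,\mb{Z}) \twoheadrightarrow H_1(\Gamma_D,\mb{Z})/n \xrightarrow{\tilde\epsilon_n} \mathrm{Ker}(\rho_Y/n) \stackrel{\sim}{\leftarrow} \mathrm{Ker}(\rho_Y) \hookrightarrow CH_0(Y);
\end{align*}
naturality of the niveau spectral sequence in the coefficients makes this independent of the chosen $n$, and by construction $\mathrm{Im}(\delta_Y)=\mathrm{Ker}(\rho_Y)$.

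The main obstacle is the last step: establishing the isomorphism $\mathrm{Ker}(\rho_Y)\simeq \mathrm{Ker}(\rho_Y/n)$ for sufficiently divisible $n$. Injectivity follows from $n\cdot A_0(Y)=0$ and the splitting of $CH_0(Y)$. Surjectivity uses the intersection-triviality condition $\rho_Y^{geo}(A_0(Y))\cap n\pi_1^{geo}(Y)=0$ together with the identity $\pi_1^{geo}(Y)\cap n\pi_1^{ab}(Y)=n\pi_1^{geo}(Y)$, the latter being a consequence of torsion-freeness of $G_k=\hat{\mb{Z}}$ applied to $0\to \pi_1^{geo}(Y)\to \pi_1^{ab}(Y)\to G_k\to 0$.
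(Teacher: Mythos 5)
Your argument is correct in substance but follows a genuinely different path from the paper's at both stages. For the mod-$n$ step, the paper's actual proof does not use the suspension: it derives the exact sequence $H_1(\Gamma_D,\mb{Z}/n)\to CH_0(Y)/n\to \pi_1^{ab}(Y)/n$ (Lemma \ref{L5}) from Sublemma \ref{L6}, i.e.\ from localization comparisons of $CH_0$ and $\pi_1^{ab}$ for $Y$, $Z$, $Z'$ and $S$; your route via $\Gamma_Y\simeq \Sigma\Gamma_D$ and Proposition \ref{P2-2} is precisely the alternative recorded in Remark \ref{R2}, where it is noted to produce the same map $\delta_n$, so this part is sanctioned but is not the paper's proof. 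For the passage to integral coefficients, the paper takes the projective limit of Lemma \ref{L5} over all $n$, identifies $\mathrm{Ker}(\rho_Y)$ with $\mathrm{Ker}(\rho_Y^{\wedge})$ via the finiteness of $A_0(Y)$, and uses density of $H_1(\Gamma_D,\mb{Z})$ in $H_1(\Gamma_D,\hat{\mb{Z}})$ to get surjectivity onto the kernel; you instead fix a single sufficiently divisible $n$ and prove $\mathrm{Ker}(\rho_Y)\simeq\mathrm{Ker}(\rho_Y/n)$ directly, which is more elementary (no inverse limits, no density argument) at the price of justifying the choice of $n$. That choice is legitimate: a profinite abelian group has no nonzero elements divisible by every integer, so the finite group $\rho_Y^{geo}(A_0(Y))$ can indeed be separated from $n\pi_1^{geo}(Y)$. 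The one point you should make explicit is that the surjectivity half of your diagram chase requires $\deg : CH_0(Y)\to\mb{Z}$ to be surjective: if $\mathrm{Im}(\deg)=e\mb{Z}$ with $e>1$, a class in $\mathrm{Ker}(\rho_Y/n)$ only has degree in $\mathrm{lcm}(n,e)\mb{Z}$, which need not lie in $ne\mb{Z}$ when $\gcd(n,e)>1$, and then the representative cannot be moved into $A_0(Y)$ modulo $nCH_0(Y)$. This is harmless here because $Y$ contains $Y_0\times O$ with $Y_0$ geometrically irreducible over a finite field, which has index one by Lang--Weil, but it deserves a sentence.
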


We prove this proposition admitting the following lemma.
\begin{lem}\label{L5}
There exists an exact sequence:
\begin{align*}
\xymatrix{
H_1(\Gamma _D,\mb{Z}/n) \ar[r]^{\delta _n} &CH_0(Y)/n \ar[r]^{\rho_Y/n} &\pi_1^{ab}(Y)/n.
}
\end{align*}
\end{lem}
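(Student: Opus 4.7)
My strategy is to deduce Lemma~\ref{L5} from Proposition~\ref{P2-2} applied to $Y$ by identifying the group $H_2(\Gamma _Y,\mb{Z}/n)$ appearing there with $H_1(\Gamma _D,\mb{Z}/n)$. Concretely, Proposition~\ref{P2-2} supplies the exact sequence
\[
H_2(\Gamma _Y,\mb{Z}/n)\xrightarrow{\epsilon _{Y,n}}CH_0(Y)/n\xrightarrow{\rho _Y/n}\pi _1^{ab}(Y)/n,
\]
so once I produce an isomorphism $\phi :H_1(\Gamma _D,\mb{Z}/n)\xrightarrow{\sim }H_2(\Gamma _Y,\mb{Z}/n)$, I may set $\delta _n:=\epsilon _{Y,n}\circ \phi $ and the claimed exactness at $CH_0(Y)/n$ is immediate (the image of $\delta _n$ coincides with that of $\epsilon _{Y,n}$, since $\phi $ is surjective).

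The key geometric input is that, as a simplicial complex, $\Gamma _Y$ is canonically the (unreduced) suspension $\Sigma \Gamma _D$ of $\Gamma _D$. To verify this, I will enumerate the irreducible components of $Y^{(r)}$ for each $r\geq 1$. Writing $\{D_j\}_{j\in J}$ for the irreducible components of $D$, these fall into three disjoint families: (i) the components of $D_{j_1\cdots j_{r-1}}\times O$, coming from intersections that involve $Y_0\times O$ together with $r-1$ of the $D_{j_s}\times \mb{P}^1$; (ii) the analogous components $D_{j_1\cdots j_{r-1}}\times \infty $ involving $Y_0\times \infty $; and (iii) the components of $D_{j_1\cdots j_r}\times \mb{P}^1$ not involving either $Y_0\times O$ or $Y_0\times \infty $. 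The crucial observation is that no irreducible component of any $Y^{(r)}$ involves \emph{both} $Y_0\times O$ and $Y_0\times \infty $, because $(Y_0\times O)\cap (Y_0\times \infty )=\emptyset $ in $Y$. Translating this into the language of Definition~\ref{d2}, $\Gamma _Y$ is obtained from the embedded subcomplex $\Gamma _D\hookrightarrow \Gamma _Y$ (the vertices $v_j\leftrightarrow D_j\times \mb{P}^1$ and their higher intersections) by adjoining two apex vertices $o,\infty $, together with, for each simplex $\sigma $ of $\Gamma _D$, the two new simplices $\sigma \ast o$ and $\sigma \ast \infty $ — precisely the simplicial description of $\Sigma \Gamma _D=\Gamma _D\ast S^0$.

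Once this identification is in place, the classical suspension formula gives
\[
H_i(\Gamma _Y,\mb{Z}/n)\;\cong \;\widetilde{H}_{i-1}(\Gamma _D,\mb{Z}/n)\qquad (i\geq 1),
\]
and in particular $H_2(\Gamma _Y,\mb{Z}/n)\cong H_1(\Gamma _D,\mb{Z}/n)$ (reduced and unreduced $H_1$ agree). This furnishes the isomorphism $\phi $, and composing with $\epsilon _{Y,n}$ completes the construction of $\delta _n$ and the proof.

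I do not expect a serious obstacle: the argument is essentially the combinatorial identification of $\Gamma _Y$ with $\Sigma \Gamma _D$ followed by a standard homology computation. The only delicate point is bookkeeping at the level of the ordered chain complex defining $H_*(\Gamma _Y,\mb{Z}/n)$ in Definition~\ref{d2}; this can be handled cleanly by fixing an ordering on the irreducible components of $Y$ in which $Y_0\times O$ and $Y_0\times \infty $ come first, so that the simplicial isomorphism $\Gamma _Y\cong \Sigma \Gamma _D$ is compatible with the chosen orientations of simplices.
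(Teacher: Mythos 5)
Your proof is correct, but it is not the route the paper takes for Lemma~\ref{L5}; it is, almost verbatim, the alternative construction that the paper records afterwards in Remark~\ref{R2}, where the suspension isomorphism $H_2(\Gamma_Y,\mb{Z}/n)\simeq H_1(\Gamma_D,\mb{Z}/n)$ is stated and $\delta_n$ is identified with $\epsilon_{Y,n}$ under it. The paper's own proof instead establishes Sublemma~\ref{L6}, i.e.\ the two ``Mayer--Vietoris'' presentations $CH_0(D)/n\to CH_0(Y_0)/n^{\oplus 2}\to CH_0(Y)/n\to 0$ and $\pi_1^{ab}(D)/n\to \pi_1^{ab}(Y_0)/n^{\oplus 2}\to \pi_1^{ab}(Y)/n$ (proved by comparing the niveau spectral sequences for $Y$, $Z$, $S$, $Z'$ and the common open piece $D\times\mb{G}_m$), and then extracts $\delta_n$ by a diagram chase using the isomorphism $CH_0(Y_0)/n\simeq\pi_1^{ab}(Y_0)/n$ and the surjection $\pi_1^{ab}(D)/n\twoheadrightarrow H_1(\Gamma_D,\mb{Z}/n)$. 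Your argument is shorter and more conceptual, and the combinatorial identification $\Gamma_Y\cong\Sigma\Gamma_D$ you give (no simplex of $\Gamma_Y$ contains both apexes since $(Y_0\times O)\cap(Y_0\times\infty)=\emptyset$) is sound; but it buys only the existence of the exact sequence. The paper's longer route buys an explicit description of $\delta_n$ as induced by $\mathrm{Ker}\bigl(CH_0(D)/n\to CH_0(Y_0)/n^{\oplus 2}\bigr)$-type data, and it is precisely this description that is used later (Lemma~\ref{L7}, Proposition~\ref{P4}, Theorem~\ref{MT}) to compare $\delta_Y$ with $\delta_Y^{geo}$ and the map $\alpha^{(\ell)}$. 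So if you adopt your construction, you would still owe the reader the compatibility asserted without proof in Remark~\ref{R2} before the main theorem's proof goes through.
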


\renewcommand{\proofname}{\textit{Proof of Proposition $\ref{P3}$}}
\begin{proof}We consider the following diagram:
\begin{align}\label{p61}
\xymatrix{
H_1(\Gamma _D,\mb{Z}) \ar[d] \ar@{.>}[r] & CH_0(Y) \ar[r]^{\rho _Y} \ar[d] & \pi_1^{ab}(Y)\ar@2{-}[d]\\
H_1(\Gamma _D, \hat{\mb{Z}}) \ar[r]^{\hat{\delta }} & CH_0(Y)^{\wedge } \ar[r]^{\rho _Y^{\wedge }} &\pi _1^{ab}(Y),
}
\end{align}
where the bottom sequence is obtain by the projective limit of Lemma \ref{L5} and exact. Since $A_0(Y)$ is finite (cf. Lemma \ref{L1}(1)) and $\mathrm{Ker}(\rho _Y)\subset A_0(Y)$, we have $\mathrm{Ker}(\rho _Y)\simeq \mathrm{Ker}(\rho _Y^{\wedge })$. We define $\delta _Y$ by the composite
\begin{align*}
\xymatrix{
H_1(\Gamma _D, \mb{Z})\ar[r] &H_1(\Gamma _D, \hat{\mb{Z}}) \ar[r]^{\hat{\delta }} & \mathrm{Ker}(\rho _Y^{\wedge })\ar[r]^{\sim } &\mathrm{Ker}(\rho _Y) \ar@{^{(}->}[r]& CH_0(Y).
}
\end{align*}
Then we have $\mathrm{Im}(\delta _Y)=\mathrm{Ker}(\rho _Y)$, since the group $\mathrm{Im}(\hat{\delta })=\mathrm{Ker}(\rho _Y^{\wedge })$ is finite and the map $H_1(\Gamma _D, \mb{Z})\rightarrow H_1(\Gamma _D, \hat{\mb{Z}})$ has dense image with respect to the pro-finite topology. 
\end{proof}
\renewcommand{\proofname}{\textit{Proof}}

We consider the norm map $\sigma : H_1(\Gamma _{\overline{D}},\mb{Z})\rightarrow H_1(\Gamma _D,\mb{Z})$ and put 
\begin{align*}
G(Y):= \mathrm{Im}\bigl(\delta _Y\circ \sigma : H_1(\Gamma _{\overline{D}}, \mb{Z})\longrightarrow CH_0(Y) \bigr).
\end{align*}
The group $G(Y)$ is a subgroup of $\mathrm{Ker}\bigl( \rho _Y\bigr)$ from Proposition \ref{P3}. In the next subsection, we describe $G(Y)$ with the map $\alpha $ defined below.\bigskip

We prove Lemma \ref{L5} admitting the following sublemma.
\begin{slem}\label{L6}
There are two exact sequences:\\
\textup{(1)} $CH_0(D)/n\longrightarrow CH_0(Y_0)/n^{\oplus 2} \longrightarrow CH_0(Y)/n\longrightarrow 0$, \\
\textup{(2)} $\pi _1^{ab}(D)/n\longrightarrow \pi _1^{ab}(Y_0)/n^{\oplus 2} \longrightarrow \pi _1^{ab}(Y)/n$. 
\end{slem}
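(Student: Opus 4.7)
The plan is to derive (1) from a snake--lemma argument applied to two localization sequences for $CH_0$, and (2) from Mayer--Vietoris in \'etale cohomology combined with Pontryagin duality.

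For (1), I would start from the geometric observation that the open complement in $Y$ of the closed subscheme $Z := D \times _k \mathbb{P}^1_k$ is $U := (Y_0 \setminus D) \times _k \{O, \infty\}$, a disjoint union of two copies of $Y_0 \setminus D$. Moreover, rational equivalence along the fibers $\{x\} \times \mathbb{P}^1 \subset Z \subset Y$ for closed $x \in D$ gives an isomorphism $CH_0(Z) \cong CH_0(D)$. The localization sequence $CH_0(Z) \to CH_0(Y) \to CH_0(U) \to 0$ would then be placed alongside the direct sum of two copies of the localization sequence $CH_0(D) \to CH_0(Y_0) \to CH_0(Y_0 \setminus D) \to 0$ for the pair $(Y_0, D)$, in a commutative diagram with vertical maps $\Sigma(\delta_0, \delta_\infty) = \delta_0 + \delta_\infty$ on the left, $\beta(\alpha_0, \alpha_\infty) = i_{0*}\alpha_0 + i_{\infty *}\alpha_\infty$ in the middle, and the identity on the right. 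The snake lemma then yields $\beta$ surjective and $\mathrm{Ker}(\beta) \cong \mathrm{Ker}(\Sigma) = \{(\delta, -\delta) : \delta \in CH_0(D)\}$. Using the injectivity of $i_{0*}: CH_0(Y_0) \to CH_0(Y)$ (which follows from $p_* \circ i_{0*} = \mathrm{id}$ for the proper projection $p: Y \to Y_0$), one identifies $\mathrm{Ker}(\beta)$ with the image of $\delta \mapsto (i_*\delta, -i_*\delta)$. Tensoring the resulting integral exact sequence $CH_0(D) \to CH_0(Y_0)^{\oplus 2} \to CH_0(Y) \to 0$ with $\mathbb{Z}/n$ (right exact) then proves (1).

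For (2), I would apply the Mayer--Vietoris long exact sequence in \'etale cohomology with $\mathbb{Z}/n$ coefficients for the closed cover $Y = ((Y_0 \times _k O) \sqcup (Y_0 \times _k \infty)) \cup (D \times _k \mathbb{P}^1_k)$, whose pairwise intersection is $(D \times _k O) \sqcup (D \times _k \infty) \cong D \sqcup D$. Via the Leray spectral sequence for the projection $D \times _k \mathbb{P}^1_k \to D$, one has $H^1(D \times _k \mathbb{P}^1_k, \mathbb{Z}/n) \cong H^1(D, \mathbb{Z}/n)$, with restriction to either of $D \times O$, $D \times \infty$ being the identity. A short diagram chase extracts that $H^1(Y, \mathbb{Z}/n) \to H^1(Y_0, \mathbb{Z}/n)^{\oplus 2} \to H^1(D, \mathbb{Z}/n)$ is exact at the middle, where the second map is $(\alpha, \beta) \mapsto i^*\alpha - i^*\beta$. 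Since the groups involved are finite and $H^1(X, \mathbb{Z}/n) \cong \mathrm{Hom}(\pi_1^{ab}(X)/n, \mathbb{Z}/n)$, applying Pontryagin duality (exact on finite abelian groups) reverses the arrows and yields (2).

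The main technical obstacle is justifying $CH_0(D \times _k \mathbb{P}^1_k) \cong CH_0(D)$ on the singular SNC variety $D$; one has to verify that fiberwise $\mathbb{P}^1$-rational equivalence descends correctly to the cokernel of $\partial_1$ defining $CH_0$ on $D \times \mathbb{P}^1$. This can be handled by analyzing closed points grouped by their projection to $D$ and working component-wise on each smooth $D_j \times \mathbb{P}^1_k$, with the rational equivalences gluing correctly across the intersection loci $D_{ij} \times \mathbb{P}^1_k$.
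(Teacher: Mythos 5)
Your argument is correct, but it takes a genuinely different route from the paper's. The paper treats (1) and (2) uniformly: it introduces the disjoint union $S=(Y_0\times_k O)\sqcup(Y_0\times_k\infty)\sqcup(D\times_k\mathbb{P}^1)$ together with $Z=(Y_0\times_kO)\cup(Y_0\times_k\infty)\subset Y$ and $Z'\subset S$, observes that $Y\setminus Z\cong S\setminus Z'\cong D\times\mathbb{G}_m$, and compares the two resulting localization sequences (of Bloch--Ogus--Kato complexes for (1), of \'etale homology for (2)); the kernel of the gluing map $\beta\colon CH_0(Z')/n\to CH_0(S)/n$ is then computed on the split object $S$, where it is visibly $\{(0,0,c,-c)\}\simeq CH_0(D)/n$. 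You instead use the complementary decomposition $Z=D\times\mathbb{P}^1$, $U=(Y_0\setminus D)\times\{O,\infty\}$ for (1), which lets you avoid the \'etale homology machinery entirely and work integrally, at the cost of needing the split injectivity of $i_{0*}$ via the retraction $p_*$ --- a step the paper's split-object computation renders unnecessary; for (2) you use Mayer--Vietoris for the closed cover plus Pontryagin duality, which is the cohomological shadow of the paper's homological argument. Both proofs rest on $CH_0(D\times\mathbb{P}^1)\simeq CH_0(D)$ (resp. $H^1(D\times\mathbb{P}^1,\mathbb{Z}/n)\simeq H^1(D,\mathbb{Z}/n)$) for the singular variety $D$; the ``technical obstacle'' you flag is in fact Fulton's projective bundle formula, valid for arbitrary schemes (resp. the invariance $\pi_1(D\times\mathbb{P}^1)\simeq\pi_1(D)$ for proper $D$, since $\mathbb{P}^1$ is simply connected), and the paper uses the same isomorphism without comment. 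Two small caveats: first, ``the snake lemma yields $\mathrm{Ker}(\beta)\cong\mathrm{Ker}(\Sigma)$'' is not literally what happens (the rows are only right exact, and $\mathrm{Ker}(\beta)$ is the \emph{image} of $\mathrm{Ker}(\Sigma)$ in $CH_0(Y_0)^{\oplus 2}$), but the chase you then describe, using exactness of the top row at the middle term and injectivity of $i_{0*}$, does give exactly $\mathrm{Ker}(\beta)=\mathrm{Im}\bigl(\delta\mapsto(\iota_*\delta,-\iota_*\delta)\bigr)$; second, the paper's derivation via the spectral sequence of Proposition \ref{P2} is what makes the identification of $\delta_n$ with $\epsilon_{Y,n}$ in Remark \ref{R2} immediate, so with your construction that compatibility would require a separate verification.
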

\renewcommand{\proofname}{\textit{Proof of Lemma $\ref{L5}$}}
\begin{proof}
From Proposition \ref{P2-2} and Sublemma \ref{L6}, we obtain the following commutative diagram with exact rows:
\begin{align*}
\xymatrix{
CH_0(D)/n \ar[r] \ar[d] & CH_0(Y_0)/n^{\oplus 2}\ar[r] \ar[d]_{\simeq  } & CH_0(Y)/n \ar[r] \ar[d]_{\rho _Y/n} &0\\
\pi_1^{ab}(D)/n \ar[r] \ar@{->>}[d] &\pi_1^{ab}(Y_0)/n^{\oplus 2}\ar[r]& \pi_1^{ab}(Y)/n\\
H_1(\Gamma _D, \mb{Z}/n).
}
\end{align*}
By this diagram, we have a homomorphism
\begin{align*}
\delta _n : H_1(\Gamma _D, \mb{Z}/n)\longrightarrow CH_0(Y)/n
\end{align*}
whose image coincides with $\mathrm{Ker}(\rho _Y/n)$. 
\end{proof}

\begin{rmk}\label{R2}By Proposition \ref{P2-2}, we have an exact sequence
\begin{align*}
\xymatrix{
H_2(\Gamma _Y, \mb{Z}/n)\ar[r]^{\epsilon _{Y,n}} &CH_0(Y)/n \ar[r]^{\rho_Y/n}&\pi_1^{ab}(Y)/n.
}
\end{align*}
On the other hand, from the structure of $Y$, we obtain a suspension isomorphism
\begin{align*}
H_2(\Gamma _Y, \mb{Z}/n)\simeq H_1(\Gamma _D, \mb{Z}/n).
\end{align*}
Therefore we have the map
\begin{align*}
H_1(\Gamma _D, \mb{Z}/n)\simeq H_2(\Gamma _Y, \mb{Z}/n)\longrightarrow CH_0(Y)/n
\end{align*}
whose image coincides with $\mathrm{Ker}(\rho_Y/n)$. This map coincides with the map $\delta _n$ of Lemma \ref{L5}.
\end{rmk}\bigskip

To prove Sublemma \ref{L6}, we consider the following variety and two closed subschemes: 
\begin{align*}
&S:=\bigl( Y_0\times _kO \bigr)\sqcup\bigl( Y_0\times _k{\infty } \bigr)\sqcup\bigl( D\times _k\mb{P}^1 \bigr) ,\\
&Z:=\bigl( Y_0\times _kO \bigr)\cup \bigl( Y_0\times _k{\infty } \bigr) \subset Y ,\\
&Z^{\prime }:=\bigl( Y_0\times _kO \bigr)\sqcup \bigl( Y_0\times _k{\infty } \bigr)\sqcup \bigl( D\times _k \{ O ,\infty \} \bigr) \subset S.
\end{align*}
Then we have
\begin{align}\label{U}
Y\backslash Z \cong S\backslash Z^{\prime }\simeq D\times \mb{G}_m.
\end{align}
\renewcommand{\proofname}{\textit{Proof of Sublemma $\ref{L6}$}}
\begin{proof}
From Proposition \ref{P2} and \eqref{U}, we obtain two exact sequences of complexes for fixed $q$ :
\begin{align}\label{p51}
\xymatrix{
0 \ar[r] &E_{\ast ,q}^1(Z) \ar[r] &E_{\ast ,q}^1(Y) \ar[r] &E_{\ast ,q}^1(U) \ar[r] &0,
}
\end{align}
\begin{align}\label{p52}
\xymatrix{
0 \ar[r] &E_{\ast ,q}^1(Z^{\prime }) \ar[r] &E_{\ast ,q}^1(S) \ar[r] &E_{\ast ,q}^1(U) \ar[r] &0.
}
\end{align}

(1) From the above exact sequences \eqref{p51} and \eqref{p52} for $q=1$, we obtain the following commutative diagram with exact rows:
\begin{align}\label{a5}
\xymatrix{
E_{11}^2(D\times \mb{G}_m) \ar[r] \ar@2{-}[d] & CH_0(Z^{\prime })/n \ar[r]^{\beta } \ar[d]& CH_0(S)/n \ar[r] \ar[d] &CH_0(D\times \mb{G}_m) \ar@2{-}[d]\\
E_{11}^2(D\times \mb{G}_m) \ar[r] & CH_0(Z)/n \ar[r]& CH_0(Y)/n \ar[r]&CH_0(D\times \mb{G}_m)
}
\end{align}
Now we have $CH_0(D\times \mb{G}_m)=0$. We compute the kernel of the map $\beta $. Since there are the following isomorphisms
\begin{align*}
&CH_0(Z^{\prime })\simeq CH_0(Y_0)^{\oplus 2}\oplus CH_0(D)^{\oplus 2},\\
&CH_0(S)\simeq CH_0(Y_0)^{\oplus 2}\oplus CH_0(D\times \mb{P}^1),\\
&CH_0(D\times \mb{P}^1)\simeq CH_0(D),
\end{align*}
we have
\begin{align*}
\mathrm{Ker}(\beta )&=\bigl\{(0,0,c,-c) \bigl| \ c \in CH_0(D)/n \bigr\}\\
&\simeq CH_0(D)/n.
\end{align*}
Hence, by the diagram \eqref{a5} and $CH_0(Z)/n\simeq CH_0(Y_0)/n^{\oplus 2}$, we have an exact sequence
\begin{align*}
CH_0(D)/n\longrightarrow CH_0(Y_0)/n^{\oplus 2}\longrightarrow CH_0(Y)/n\longrightarrow 0.
\end{align*}

(2) Considering the localization sequence of $\acute{\text{e}}$tale homology groups, we obtain the following commutative diagram
{\small \begin{align}\label{a6}
\xymatrix{
H_2(D\times \mb{G}_m,\mb{Z}/n) \ar[r] \ar@2{-}[d] & \pi_1^{ab}(Z^{\prime })/n \ar[r]^{\beta^{\prime } } \ar[d]& \pi_1^{ab}(S)/n \ar[r] \ar[d] &H_1(D\times \mb{G}_m, \mb{Z}/n) \ar@2{-}[d]\\
H_2(D\times \mb{G}_m, \mb{Z}/n) \ar[r] & \pi_1^{ab}(Z)/n \ar[r]& \pi_1^{ab}(Y)/n \ar[r]& H_1(D\times \mb{G}_m, \mb{Z}/n).
}
\end{align}}
Similarly to (1), we have $\mathrm{Ker}\bigl(\beta ^{\prime }\bigr)\simeq \pi_1^{ab}(D)/n$. Hence, by the diagram \eqref{a6} and $\pi_1^{ab}(Z)/n\simeq \pi_1^{ab}(Y_0)/n^{\oplus 2}$, we have an exact sequence
\begin{align*}
\pi_1^{ab}(D)/n\longrightarrow \pi_1^{ab}(Y_0)/n^{\oplus 2}\longrightarrow \pi_1^{ab}(Y)/n. 
\end{align*}
\end{proof}

We use the following lemma to define the map $\delta _Y^{geo}$ below.
\begin{lem}\label{L7}
There are two exact sequences:\\
\textup{(1)} $A_0(D)\longrightarrow A_0(Y_0)^{\oplus 2} \longrightarrow A_0(Y)$,\\
\textup{(2)} $\pi _1^{geo}(D)\longrightarrow \pi _1^{geo}(Y_0)^{\oplus 2} \longrightarrow \pi _1^{geo}(Y)$. 
\end{lem}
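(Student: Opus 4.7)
The strategy will be to mimic the proof of Sublemma \ref{L6}, this time integrally (resp.\ in the profinite completion), and then restrict to the degree-zero (resp.\ geometric) subgroup by exploiting the compatibility with the degree map $\deg \colon CH_0(-) \to \mathbb{Z}$ and the structure map $\pi_1^{ab}(-) \to G_k^{ab}$.

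For part (1), I would first upgrade Sublemma \ref{L6}(1) to the integral assertion
\begin{align*}
CH_0(D) \xrightarrow{\ f\ } CH_0(Y_0)^{\oplus 2} \xrightarrow{\ g\ } CH_0(Y) \to 0,
\end{align*}
with $f(c) = (i_{\ast}c,\, -i_{\ast}c)$ for $i \colon D \hookrightarrow Y_0$. This should follow by applying the classical (integral) Chow localization sequence to the pairs $(Y,Z)$ and $(S,Z^{\prime})$ sharing the common open $U = D \times \mb{G}_m$; the kernel computation $\mathrm{Ker}(\beta) \simeq CH_0(D)$ in Sublemma \ref{L6}(1) carries through verbatim with $\mathbb{Z}$-coefficients. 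Then, given $\alpha \in A_0(Y_0)^{\oplus 2}$ with $g(\alpha) = 0$, I would write $\alpha = f(\gamma) = (i_{\ast}\gamma,\, -i_{\ast}\gamma)$ for some $\gamma \in CH_0(D)$; since the first coordinate lies in $A_0(Y_0)$, proper pushforward preservation of degree gives $\deg(\gamma) = \deg(i_{\ast}\gamma) = 0$, so $\gamma \in A_0(D)$, as required.

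For part (2), I would pass to the projective limit $\varprojlim_n$ of the exact sequence of Sublemma \ref{L6}(2), producing
\begin{align*}
\pi_1^{ab}(D) \xrightarrow{\ f^{\prime}\ } \pi_1^{ab}(Y_0)^{\oplus 2} \xrightarrow{\ g^{\prime}\ } \pi_1^{ab}(Y).
\end{align*}
Each mod-$n$ term is finite (as $D$, $Y_0$, $Y$ are of finite type over the finite field $k$) and the transition maps are surjective, so Mittag-Leffler is automatic and the limit preserves the three-term exactness. The same diagram chase as in part (1) then applies: for $\beta \in \pi_1^{geo}(Y_0)^{\oplus 2}$ with $g^{\prime}(\beta) = 0$, choose $\gamma^{\prime} \in \pi_1^{ab}(D)$ with $f^{\prime}(\gamma^{\prime}) = \beta$; by compatibility of $f^{\prime}$ with the structure map $\theta \colon \pi_1^{ab}(-) \to G_k^{ab}$, the image of $\gamma^{\prime}$ in $(G_k^{ab})^{\oplus 2}$ is $(\theta(\gamma^{\prime}), -\theta(\gamma^{\prime}))$, which vanishes because $\beta \in \pi_1^{geo}(Y_0)^{\oplus 2}$; hence $\theta(\gamma^{\prime}) = 0$ and $\gamma^{\prime} \in \pi_1^{geo}(D)$.

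The main step requiring care is verifying the explicit form $c \mapsto (i_{\ast}c,\, -i_{\ast}c)$ of the maps $f$ and $f^{\prime}$ in the integral/profinite setting. This means retracing the boundary maps in the Chow-theoretic localization (for $f$) and in the projective limit of the étale-homological boundary maps (for $f^{\prime}$) to confirm that the kernel identification of Sublemma \ref{L6} descends to this explicit shape. Once that is in place, the reduction to $A_0$ (resp.\ $\pi_1^{geo}$) is essentially immediate from functoriality of $\deg$ and of the structure map to $G_k^{ab}$.
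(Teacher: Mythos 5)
Your part (2) is essentially the paper's argument: pass to $\varprojlim_n$ of Sublemma \ref{L6}(2) (exactness survives because all the mod-$n$ groups are finite) and then chase against the structure maps to $G_k^{ab}$; the paper packages that chase as the snake lemma applied to a diagram whose bottom row is $0\to G_k^{ab}\to (G_k^{ab})^{\oplus 2}\to G_k^{ab}\to 0$, which is the same thing. The degree-map reduction at the end of your part (1) is likewise a correct chase \emph{once the integral exact sequence with the explicit antidiagonal first map is in hand}.

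The gap is in how you propose to get that integral sequence. The classical integral localization sequence $CH_0(Z)\to CH_0(Y)\to CH_0(U)\to 0$ is only right exact: it gives surjectivity of $CH_0(Z)\to CH_0(Y)$ but says nothing about its kernel, and identifying that kernel with the image of $\mathrm{Ker}(\beta)$ is precisely the content of Sublemma \ref{L6}(1). In the paper's mod-$n$ proof this identification comes from the common term $E^2_{11}(D\times \mb{G}_m)$ sitting to the left of $CH_0(Z')/n$ and $CH_0(Z)/n$ in diagram \eqref{a5}, i.e.\ from a \emph{four}-term exact sequence supplied by the niveau spectral sequence of Proposition \ref{P2} — machinery that is set up only with finite coefficients. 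So the kernel computation does not "carry through verbatim" integrally; to make your route work you would need the integral four-term sequence obtained from the short exact sequence of two-term Gersten complexes, with first term the degree-one homology of $\bigoplus_{x\in U_1}\kappa(x)^{\times}\to\bigoplus_{x\in U_0}\mb{Z}$ for $U=D\times_k\mb{G}_m$, together with the compatibility of the two boundary maps out of that common group for the pairs $(Y,Z)$ and $(S,Z')$. That is true and elementary, but it is an additional argument you have not given. The simpler fix — and what the paper actually does — is to avoid the integral statement altogether: since $A_0$ is finite (Lemma \ref{L1}(1)), one loses nothing by taking $\varprojlim_n$ of the already-proven mod-$n$ sequence of Sublemma \ref{L6}(1) and applying the snake lemma against $0\to K_0(k)^{\wedge}\to (K_0(k)^{\wedge})^{\oplus 2}\to K_0(k)^{\wedge}\to 0$, the kernels of the completed degree maps being exactly the groups $A_0(-)$.
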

\renewcommand{\proofname}{\textit{Proof}}
\begin{proof}
From the projective limit of Sublemma \ref{L6}, we have the following commutative diagram with exact rows:
\begin{align*}
\xymatrix{
& CH_0(D)^{\wedge } \ar[r] \ar[d] & \bigl(CH_0(Y_0)^{\wedge}\bigr)^{ \oplus 2} \ar[r] \ar[d] & CH_0(Y)^{\wedge } \ar[d] \ar[r] &0\\
0 \ar[r] & K_0(k)^{\wedge } \ar[r] & \bigl( K_0(k)^{\wedge}\bigr)^{ \oplus 2} \ar[r] & K_0(k)^{\wedge } \ar[r] & 0. 
}
\end{align*}
Applying the snake lemma to this diagram, we obtain an exact sequence
\begin{align*}
A_0(D)\longrightarrow A_0(Y_0)^{\oplus 2} \longrightarrow A_0(Y).
\end{align*}
Simiarly to (1), the sequence (2) is obtained by applying the snake lemma to the following commutative diagram with exact rows (cf. Sublemma \ref{L6}):
\begin{align*}
\xymatrix{
& \pi_1^{ab}(D) \ar[r] \ar@{->>}[d] & \pi_1^{ab}(Y_0)^{\oplus 2} \ar[r] \ar@{->>}[d] & \pi_1^{ab}(Y)\ar@{->>}[d]\\
0 \ar[r] & G_k^{ab} \ar[r] & {G_k^{ab}}^{\oplus 2} \ar[r] & G_k^{ab} \ar[r] & 0. 
}
\end{align*}
\end{proof}\bigskip

Now we define the map 
\begin{align*}
\delta _Y^{geo} : H_1(\Gamma _D, \hat{\mb{Z}})_{\overline{D}} \longrightarrow CH_0(Y)
\end{align*}
to be that induced by the following commutative diagram with exact rows (cf. Lemma \ref{L7}, Proposition \ref{P1}):
\begin{align*}
\xymatrix{
A_0(D) \ar[r] \ar[d] & A_0(Y_0)^{\oplus 2} \ar[r] \ar[d]^{\simeq } & A_0(Y) \ar[d]\\
\pi_1^{geo}(D) \ar[r] \ar@{->>}[d] & \pi_1^{geo}(Y_0)^{\oplus 2} \ar[r] & \pi_1^{geo}(Y)\\
H_1(\Gamma _D, \hat{\mb{Z}})_{\overline{D}}. 
}
\end{align*}
Here the bijectivity of the middle vertical map is due to Kato$-$Saito \cite{KS}. The following proposition plays a key role in the proof of the main result.
\begin{prop}\label{P4}
The following diagram commutes:
\begin{align}\label{p41}
\xymatrix{
H_1(\Gamma _{\overline{D}}, \mb{Z}) \ar[r] \ar[d]^{\sigma } & H_1(\Gamma _D, \hat{\mb{Z}})_{\overline{D}} \ar[d]^{\delta _Y^{geo}}\\
H_1(\Gamma _D, \mb{Z}) \ar[r]^{\delta _Y} & CH_0(Y).
}
\end{align}
\end{prop}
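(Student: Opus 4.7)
My plan is to compare both maps after passing to the profinite completion $CH_0(Y)^{\wedge}$, where they both factor through the map $\hat{\delta}\colon H_1(\Gamma_D,\hat{\mb{Z}})\to CH_0(Y)^{\wedge}$ appearing in the proof of Proposition \ref{P3}. First, I would verify that $\delta_Y^{geo}$ takes values in $\mathrm{Ker}(\rho_Y)\subset CH_0(Y)$: a chase in its defining diagram shows that its image lies in $\mathrm{Ker}(\rho_Y^{geo})\subset A_0(Y)$, and since $k$ is finite the degree map identifies $CH_0(Y)/A_0(Y)$ with a subgroup of $G_k^{ab}\cong \hat{\mb{Z}}$, whence $\mathrm{Ker}(\rho_Y^{geo})=\mathrm{Ker}(\rho_Y)$. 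By Lemma \ref{L1}(1) the finite group $\mathrm{Ker}(\rho_Y)$ embeds into $\mathrm{Ker}(\rho_Y^{\wedge})\subset CH_0(Y)^{\wedge}$, so it is enough to check commutativity of the square obtained from (\ref{p41}) by composing with the canonical map $CH_0(Y)\to CH_0(Y)^{\wedge}$.

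\textbf{Main step.} The core assertion to establish is that the composition $H_1(\Gamma_D,\hat{\mb{Z}})_{\overline{D}}\xrightarrow{\delta_Y^{geo}} CH_0(Y)\to CH_0(Y)^{\wedge}$ coincides with the restriction of $\hat{\delta}$ to $H_1(\Gamma_D,\hat{\mb{Z}})_{\overline{D}}$. I would prove this by superimposing the diagram defining $\delta_Y^{geo}$ (from Lemma \ref{L7} and Proposition \ref{P1}) onto the one defining $\hat{\delta}$ (from Sublemma \ref{L6} and Proposition \ref{P2-2}), forming a single commutative diagram whose four essential horizontal exact sequences are the $A_0$-, $CH_0^{\wedge}$-, $\pi_1^{geo}$- and $\pi_1^{ab}$-rows for the triple $(D,\, Y_0^{\sqcup 2},\, Y)$, linked vertically by the reciprocity maps and by the natural inclusions $A_0\hookrightarrow CH_0^{\wedge}$ and $\pi_1^{geo}\hookrightarrow \pi_1^{ab}$. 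For $\xi\in H_1(\Gamma_D,\hat{\mb{Z}})_{\overline{D}}$, Proposition \ref{P1} provides a lift $\tilde{\xi}\in \pi_1^{geo}(D)\subset \pi_1^{ab}(D)$; this single lift computes both $\delta_Y^{geo}(\xi)$ in the ``geo'' picture and $\hat{\delta}(\xi)$ in the ``ab'' picture, once one knows that the Kato--Saito isomorphism $A_0(Y_0)\xrightarrow{\sim}\pi_1^{geo}(Y_0)$ is the restriction of the profinite isomorphism $CH_0(Y_0)^{\wedge}\xrightarrow{\sim}\pi_1^{ab}(Y_0)$ already used in (\ref{p61}). The chases in the two pictures then produce the same element of $A_0(Y)\subset CH_0(Y)^{\wedge}$.

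\textbf{Conclusion and main obstacle.} Given $y\in H_1(\Gamma_{\overline{D}},\mb{Z})$, let $\xi$ denote the image of $\sigma(y)$ in $H_1(\Gamma_D,\hat{\mb{Z}})$, which lies in $H_1(\Gamma_D,\hat{\mb{Z}})_{\overline{D}}$ by definition of the top arrow of (\ref{p41}). By the construction of $\delta_Y$ in the proof of Proposition \ref{P3}, $\delta_Y(\sigma(y))$ is the unique element of $\mathrm{Ker}(\rho_Y)$ mapping to $\hat{\delta}(\xi)$ in $CH_0(Y)^{\wedge}$; by the main step, $\delta_Y^{geo}(\xi)$ also lies in $\mathrm{Ker}(\rho_Y)$ and maps to $\hat{\delta}(\xi)$, so the injectivity of $\mathrm{Ker}(\rho_Y)\hookrightarrow CH_0(Y)^{\wedge}$ yields $\delta_Y(\sigma(y))=\delta_Y^{geo}(\xi)$, as needed. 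The main obstacle is the bookkeeping in the superimposed diagram: one must verify that the snake-lemma construction of $\hat{\delta}$ from the $CH_0^{\wedge}/\pi_1^{ab}$ rows and that of $\delta_Y^{geo}$ from the $A_0/\pi_1^{geo}$ rows are compatible under the vertical inclusions of degree-zero parts. This amounts to the functoriality of the reciprocity maps with respect to the short exact sequence $0\to A_0\to CH_0^{\wedge}\to \hat{\mb{Z}}\to 0$ and its abelian-fundamental-group analogue, and it must be spelled out to justify using the single lift $\tilde{\xi}\in \pi_1^{geo}(D)$ in both diagram chases simultaneously.
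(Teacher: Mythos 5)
Your proposal is correct and takes essentially the same route as the paper: both arguments unwind the definitions of $\delta_Y$ (via $\hat{\delta}$ on the $CH_0^{\wedge}/\pi_1^{ab}$ rows) and $\delta_Y^{geo}$ (on the $A_0/\pi_1^{geo}$ rows) and reduce the commutativity of \eqref{p41} to the compatibility of the two snake-type constructions under the inclusions $A_0\hookrightarrow CH_0^{\wedge}$ and $\pi_1^{geo}\hookrightarrow \pi_1^{ab}$, together with the compatibility of the norm maps. The three auxiliary commutative squares cited in the paper's proof are precisely the compatibilities you isolate in your ``main step'' and ``main obstacle'' paragraphs, so your write-up is a more detailed version of the same diagram chase.
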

\begin{proof}
The commutativity of this diagram follows from the constructions of $\delta _Y, \delta _Y^{geo}$ and the commutativity of the following diagrams (cf. Lemma \ref{L2}, Proposition \ref{P1})
\begin{align*}
\xymatrix{
\pi_1^{geo}(D) \ar[r] \ar[d] & \pi_1^{ab}(D)\ar[d] & A_0(Y) \ar[d] \ar[r] &CH_0(Y)\ar[d]\\
H_1(\Gamma _D, \hat{\mb{Z}})_{\overline{D}} \ar[r] & H_1(\Gamma _D, \hat{\mb{Z}}), & \pi_1^{geo}(Y)\ar[r] &\pi_1^{ab}(Y), \\
\pi_1^{ab}(\overline{D}) \ar[r] \ar[d] & \pi_1^{ab}(D)\ar[d] \\
H_1(\Gamma _{\overline{D}}, \hat{\mb{Z}}) \ar[r]^{\hat{\sigma }} & H_1(\Gamma _D, \hat{\mb{Z}}). 
}
\end{align*}
\end{proof}

We use the following map in the proof of the main result.
\begin{lem}\label{L8}
There exists an injective homomorphism
\begin{align*}
\xymatrix{
\psi : \mathrm{Coker}\big( A_0(D)\rightarrow A_0(Y_0) \bigr) \ar@{^{(}->}[r] &A_0(Y).
}
\end{align*}
\end{lem}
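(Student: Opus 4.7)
The plan is to extract $\psi$ from the three-term exact sequence
\[
A_0(D) \xrightarrow{\phi} A_0(Y_0)^{\oplus 2} \xrightarrow{\pi} A_0(Y)
\]
of Lemma \ref{L7}(1). The first step is to pin down $\phi$ explicitly. Tracing the proof of Sublemma \ref{L6}(1), the kernel of $\beta$ is identified with $CH_0(D)$ via $c \mapsto (0,0,c,-c)$ inside $CH_0(Z') = CH_0(Y_0)^{\oplus 2} \oplus CH_0(D)^{\oplus 2}$, and the pushforward $CH_0(Z') \to CH_0(Z) = CH_0(Y_0)^{\oplus 2}$ acts as the identity on the $Y_0$-factors and as $i_\ast$ with $i : D \hookrightarrow Y_0$ on the $D$-factors. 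This yields $\phi(c) = (i_\ast c, -i_\ast c)$, and the same description passes to the degree-zero parts via the snake-lemma argument of Lemma \ref{L7}(1).

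With this formula in hand, I define $\tilde\psi : A_0(Y_0) \to A_0(Y)$ by $\tilde\psi(a) := \pi(a, -a)$. For $c \in A_0(D)$ one has $\tilde\psi(i_\ast c) = \pi(i_\ast c, -i_\ast c) = \pi(\phi(c)) = 0$ by exactness, so $\tilde\psi$ factors through a well-defined homomorphism
\[
\psi : \mathrm{Coker}\bigl( A_0(D) \xrightarrow{i_\ast} A_0(Y_0) \bigr) \longrightarrow A_0(Y).
\]
Injectivity is a direct diagram chase: if $\psi([a]) = 0$, then $\pi(a,-a) = 0$, so by exactness $(a,-a) = \phi(c) = (i_\ast c, -i_\ast c)$ for some $c \in A_0(D)$, which forces $a = i_\ast c$ and hence $[a] = 0$ in the cokernel.

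The main, and essentially only, obstacle is the sign and indexing bookkeeping required to extract the explicit formula $\phi(c) = (i_\ast c, -i_\ast c)$ from the chain of identifications used in the proofs of Sublemma \ref{L6}(1) and Lemma \ref{L7}(1). Once this formula is secured, both the construction of $\psi$ and the verification of its injectivity are formal consequences of the given three-term exact sequence.
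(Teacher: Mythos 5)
Your proposal is correct and follows essentially the same route as the paper: the paper's proof is exactly the diagram with the map $\xi : a \mapsto (a,-a)$ into $A_0(Y_0)^{\oplus 2}$, combined with the exact sequence of Lemma \ref{L7}(1), and your explicit identification of the map $A_0(D)\to A_0(Y_0)^{\oplus 2}$ as $c\mapsto (i_*c,-i_*c)$ is precisely what the paper's commutative square encodes. You merely spell out the factorization and the injectivity chase that the paper leaves implicit.
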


\begin{proof}
We consider the following commutative diagram (cf. Lemma \ref{L7})
\begin{align*}
\xymatrix{
A_0(D)\ar[r] \ar@2{-}[d] &A_0(Y_0) \ar[d]^{\xi }\\
A_0(D) \ar[r] &A_0(Y_0)^{\oplus 2} \ar[r] & A_0(Y),
}
\end{align*}
where $\xi $ maps an element $a$ of $A_0(Y_0)$ to an element $(a, -a)$ of $A_0(Y_0)^{\oplus 2}$. From this diagram, we obtain an injective homomorphism
\begin{align*}
\xymatrix{
\psi : \mathrm{Coker}\big( A_0(D)\rightarrow A_0(Y_0) \bigr) \ar@{^{(}->}[r] &A_0(Y). 
}
\end{align*}
\end{proof}
\subsection{Proof of Theorem \ref{IT}}

Let $\ell $ be a prime number. We write $\Theta _{\ell }$ for the $G_k$-module
\begin{align*}
\mathrm{Coker}\bigl( \pi_1^{ab}({\overline{D}}^{(1)})^{pro-\ell }\longrightarrow \pi_1^{ab}(\overline{Y_0})^{pro-\ell }\bigr).
\end{align*}
We consider the following $G_k$-equivariant homomorphism
\begin{align*}
\alpha ^{(\ell) } : H_1(\Gamma _{\overline{D}},\mb{Z}_{\ell }) \longrightarrow \Theta _{\ell }
\end{align*}
induced by the following commutative diagram with exact rows
\begin{align*}
\xymatrix{
&\pi_1^{ab}({\overline{D}}^{(1)})^{pro-\ell }\ar[r] \ar[d] & \pi_1^{ab}(\overline{D})^{pro-\ell } \ar[r] \ar[d] &H_1(\Gamma _{\overline{D}}, \mb{Z}_{\ell }) \ar[r] \ar[d]&0\\
0\ar[r] & \pi_1^{ab}(\overline{Y_0})^{pro-\ell } \ar[r]^{id} & \pi_1^{ab}(\overline{Y_0})^{pro-\ell } \ar[r] &0.
}
\end{align*}

By the weight argument, Matsumi, Sato and Asakura \cite[Thm. 3.3]{MSA} proved the following:
\begin{lem}[Matsumi$-$Sato$-$Asakura]\label{MSAT}
Let $\ell $ be an arbitrary prime number.\\
\textup{(1)} The image of $\alpha ^{(\ell )}$ is contained in $\bigl(\Theta _{\ell }\bigr)_{tors}$.\\
\textup{(2)} Assume that $G_k$ acts on $\bigl(\Theta _{\ell }\bigr)_{tors}$ trivially. Then the composite of canonical maps
\begin{align*}
\xymatrix{
\big(\Theta _{\ell }\bigr)_{tors} \ar[r]^{f_1}&\bigl(\big(\Theta _{\ell }\bigr)_{tors}\bigr)_{G_k} \ar[r]^{f_2}& \big(\Theta _{\ell }\bigr)_{G_k}
}
\end{align*}
is injective.
\end{lem}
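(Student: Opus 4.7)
The plan is to derive both statements from a Weil--Deligne weight argument applied to the Galois modules entering the definition of $\alpha^{(\ell)}$. I would begin by analyzing Frobenius eigenvalues on the source and target after $\otimes_{\mathbb{Z}_\ell}\mathbb{Q}_\ell$. On the source $H_1(\Gamma_{\overline D},\mathbb{Q}_\ell)$, the Galois group $G_k$ acts by permuting the finitely many simplices of $\Gamma_{\overline D}$, so it factors through a finite quotient and every Frobenius eigenvalue is a root of unity. On the target, $\pi_1^{ab}(\overline{Y_0})^{pro-\ell}$ is a finitely generated, torsion-free $\mathbb{Z}_\ell$-module, and its Frobenius eigenvalues are algebraic numbers of absolute value $\sqrt{q}$ (Weil's theorem for the Albanese/Picard variety when $\ell\neq\operatorname{ch}(k)$, and its logarithmic de Rham--Witt analogue otherwise); in particular $1$ is not an eigenvalue. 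The same conclusion then holds for the $G_k$-equivariant quotient $\Theta_\ell^{free}:=\Theta_\ell/(\Theta_\ell)_{tors}$.

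For part (1), I would compose $\alpha^{(\ell)}\otimes\mathbb{Q}_\ell$ with the projection to $\Theta_\ell^{free}\otimes\mathbb{Q}_\ell$. The resulting $G_k$-equivariant $\mathbb{Q}_\ell$-linear map has a source on which Frobenius is killed by a polynomial whose roots are roots of unity and a target on which Frobenius is killed by a polynomial whose roots all have absolute value $\sqrt{q}\neq 1$; these polynomials are coprime, so the map vanishes. Hence the image of $\alpha^{(\ell)}$ in $\Theta_\ell$ lies in the kernel of $\Theta_\ell\to\Theta_\ell^{free}$, i.e.\ in $(\Theta_\ell)_{tors}$.

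For part (2), the triviality of the $G_k$-action on $(\Theta_\ell)_{tors}$ makes $f_1$ the identity, so it suffices to show that $f_2$ is injective. I would take $t\in(\Theta_\ell)_{tors}$ with $t=(\mathrm{Frob}-1)x$ for some $x\in\Theta_\ell$ and reduce $x$ modulo torsion to $\bar x\in\Theta_\ell^{free}$; then $(\mathrm{Frob}-1)\bar x=0$. By the weight input above, $\mathrm{Frob}-1$ acts injectively on $\Theta_\ell^{free}\otimes\mathbb{Q}_\ell$ and, since $\Theta_\ell^{free}$ is $\mathbb{Z}_\ell$-torsion free, also on $\Theta_\ell^{free}$ itself. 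Therefore $\bar x=0$ and $x\in(\Theta_\ell)_{tors}$, on which $G_k$ acts trivially by hypothesis, so $t=(\mathrm{Frob}-1)x=0$.

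The main obstacle I anticipate is the uniform weight input for $\pi_1^{ab}(\overline{Y_0})^{pro-\ell}$, namely its torsion-freeness together with the absence of $1$ as a Frobenius eigenvalue, which is standard in the $\ell\neq\operatorname{ch}(k)$ case via Weil's theorem but requires the $p$-adic/logarithmic formalism from \cite{Il} and \cite{JSS} when $\ell=\operatorname{ch}(k)$. Once this input is secured, the rest of the argument is essentially formal; it is also worth noting that the hypothesis in (2) is used precisely to pass from $x\in(\Theta_\ell)_{tors}$ to $(\mathrm{Frob}-1)x=0$, and without it the injectivity of $f_2$ can genuinely fail.
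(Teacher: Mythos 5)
Your proposal is correct and follows essentially the same route as the paper, which gives no proof of its own but attributes the lemma, ``by the weight argument,'' to \cite[Thm.~3.3]{MSA}: roots of unity as Frobenius eigenvalues on $H_1(\Gamma_{\overline{D}},\mathbb{Q}_{\ell})$ versus weight-one eigenvalues on the free part of $\Theta_{\ell}$ for (1), and injectivity of $\mathrm{Frob}-1$ on $\Theta_{\ell}/(\Theta_{\ell})_{tors}$ for (2). One minor slip: $\pi_1^{ab}(\overline{Y_0})^{pro-\ell}$ need not be torsion-free (e.g.\ it can contain a finite quotient of the torsion of the N\'eron--Severi group), but this is harmless since your argument only uses eigenvalues after $\otimes\,\mathbb{Q}_{\ell}$ and the genuinely torsion-free quotient $\Theta_{\ell}/(\Theta_{\ell})_{tors}$.
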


\begin{thm}[Theorem \ref{IT}]\label{MT}
Let $\ell $ be an arbitrary prime number.\\
\textup{(1)} The $\ell $-primary part $G(Y)\{\ell \}$ of $G(Y)$ is a subquotient of $\big(\Theta _{\ell }\bigr)_{tors}$.\\
\textup{(2)} Assume that

\ \ \ \textup{(i)} each connected components of $Y^{(2)}$ has $k$-rational point,

\ \ \ \textup{(ii)} $G_k$ acts on $\big(\Theta _{\ell }\bigr)_{tors}$ trivially. 

Then $G(Y)\{\ell \}$ is isomorphic to the image of the map $\alpha ^{(\ell )}$.
\end{thm}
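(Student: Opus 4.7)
The plan is to realize $G(Y)\{\ell\}$ as the image of a composite that begins with $\alpha^{(\ell)}$ and ends with the injection $\psi$ from Lemma \ref{L8}, passing through the $G_k$-coinvariants of $\Theta_\ell$. First, by Proposition \ref{P4} I will factor $\delta_Y\circ\sigma$ as $\delta_Y^{geo}\circ\tau$, where $\tau:H_1(\Gamma_{\overline{D}},\mb{Z})\to H_1(\Gamma_D,\hat{\mb{Z}})_{\overline{D}}$ is the natural map. Since $\delta_Y^{geo}$ lands in $A_0(Y)$, the subgroup $G(Y)$ is already contained in $A_0(Y)$, and I can restrict attention to the $\ell$-primary part.

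Next I would combine the defining diagram of $\delta_Y^{geo}$ with three inputs: (a) the Kato$-$Saito reciprocity $A_0(Y_0)\simeq\pi_1^{geo}(Y_0)$, and similarly for each irreducible component $D_i$ of $D$, which holds because these varieties are smooth and projective; (b) Lemma \ref{L3}, identifying $\pi_1^{geo}(X)\{\ell\}$ with $\bigl(\pi_1^{ab}(\overline{X})^{pro-\ell}\bigr)_{G_k}$ for $X=Y_0$ and $X=D^{(1)}$; and (c) the right-exactness of the coinvariants functor $(-)_{G_k}$ applied to the defining cokernel sequence of $\Theta_\ell$. Together these will produce a canonical isomorphism
\begin{align*}
(\Theta_\ell)_{G_k}\ \simeq\ A_0(Y_0)\{\ell\}/\mathrm{Im}\bigl(A_0(D)\{\ell\}\to A_0(Y_0)\{\ell\}\bigr),
\end{align*}
which by Lemma \ref{L8} injects via $\psi$ into $A_0(Y)\{\ell\}$. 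A diagram chase, using the anti-diagonal structure $x\mapsto(\iota_*(x),-\iota_*(x))$ of the connecting map $\pi_1^{ab}(\overline{D})^{pro-\ell}\to\bigl(\pi_1^{ab}(\overline{Y_0})^{pro-\ell}\bigr)^{\oplus 2}$ extracted from the proof of Sublemma \ref{L6}, matched against the definition $\xi(a)=(a,-a)$ of $\psi$, should then show that the $\ell$-primary component of $\delta_Y\circ\sigma$ coincides with the composite
\begin{align*}
H_1(\Gamma_{\overline{D}},\mb{Z}_\ell)\ \xrightarrow{\alpha^{(\ell)}}\ \Theta_\ell\ \longrightarrow\ (\Theta_\ell)_{G_k}\ \xrightarrow{\psi}\ A_0(Y)\{\ell\}.
\end{align*}
Because $\psi$ is injective, $G(Y)\{\ell\}$ becomes isomorphic to the image of $\alpha^{(\ell)}$ inside $(\Theta_\ell)_{G_k}$.

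Assertion (1) will follow at once from Lemma \ref{MSAT}(1): since $\mathrm{Im}(\alpha^{(\ell)})\subseteq(\Theta_\ell)_{tors}$, the group $G(Y)\{\ell\}$ is a quotient of $\mathrm{Im}(\alpha^{(\ell)})$ and hence a subquotient of $(\Theta_\ell)_{tors}$. For assertion (2), assumption (ii) together with Lemma \ref{MSAT}(2) makes the canonical map $(\Theta_\ell)_{tors}\to(\Theta_\ell)_{G_k}$ injective, so the quotient $\mathrm{Im}(\alpha^{(\ell)})\twoheadrightarrow G(Y)\{\ell\}$ becomes an isomorphism. Assumption (i) enters through Lemma \ref{L1}(2), which supplies the surjectivity $\bigoplus_i A_0(X_i)\to A_0(Y)$ that I will use when verifying that $A_0(Y_0)\{\ell\}/\mathrm{Im}(A_0(D)\{\ell\})$ sits in $A_0(Y)\{\ell\}$ in the way required by the identification above.

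The hard part will be the diagram chase in the middle paragraph: one must simultaneously track the norm map on the dual graphs, the Kato$-$Saito reciprocity, the coinvariants functor, and the snake-lemma style definitions of $\delta_Y^{geo}$ and $\psi$, and verify that all of this fits compatibly with $\alpha^{(\ell)}$ and with the commutative square of Proposition \ref{P4}. Once that factorization is in place, both assertions drop out of the two parts of Lemma \ref{MSAT}.
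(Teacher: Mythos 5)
Your proposal follows essentially the same route as the paper: factor $(\delta_Y\circ\sigma)$ on $\ell$-parts through $\alpha^{(\ell)}$, the coinvariants $(\Theta_\ell)_{G_k}$, and the injection $\psi$ of Lemma \ref{L8} (this is exactly the paper's map $\eta^{(\ell)}$), then deduce (1) from Lemma \ref{MSAT}(1) and (2) from Lemma \ref{MSAT}(2) together with Lemmas \ref{L1}(2) and \ref{L8}. One bookkeeping point needs correcting: since $\Theta_\ell$ is built from $\pi_1^{ab}(\overline{D}^{(1)})=\bigoplus_j\pi_1^{ab}(\overline{D}_j)$, the Kato--Saito isomorphisms and Lemma \ref{L3} identify $(\Theta_\ell)_{G_k}$ with $\mathrm{Coker}\bigl(A_0(D^{(1)})\{\ell\}\to A_0(Y_0)\{\ell\}\bigr)$, not with $\mathrm{Coker}\bigl(A_0(D)\{\ell\}\to A_0(Y_0)\{\ell\}\bigr)$; the natural surjection from the former onto the latter is an isomorphism only when $\bigoplus_i A_0(D_i)\to A_0(D)$ is surjective, and that is precisely where assumption (i) and Lemma \ref{L1}(2) enter --- the embedding $\psi$ itself is unconditional, so your stated reason for needing (i) is misplaced. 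Consequently the assertion at the end of your middle paragraph, that $G(Y)\{\ell\}$ is unconditionally isomorphic to the image of $\alpha^{(\ell)}$ in $(\Theta_\ell)_{G_k}$, is too strong (it would make assumption (i) superfluous in part (2)); without (i) one only obtains $G(Y)\{\ell\}$ as a quotient of that image, which is all that part (1) requires.
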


\begin{proof}
(1) For a finite abelian group $M$, we write $M^{(\ell )}$ for the maximal $\ell $-quotient. Since $A_0(Y)$ is finite (cf. Lemma \ref{L1}(1)), the $\ell $-primary part $G(Y)\{\ell \}$ is identified with $G(Y)^{(\ell )}$, and hence identified with the image of the composite map
\begin{align*}
\big(\delta _Y\circ \sigma \bigr)^{(\ell )} : H_1(\Gamma _{\overline{D}},\mb{Z})\longrightarrow A_0(Y)\longrightarrow A_0(Y)^{(\ell )}.
\end{align*}
From the commutativity of the diagram \eqref{p41} in Proposition \ref{P4} and the constructions of $\delta _Y^{geo}$ and $\alpha ^{(\ell )}$, the map $\big(\delta _Y\circ \sigma \bigr)^{(\ell )}$ is decomposed as follows:
\begin{align*}
\xymatrix{
H_1(\Gamma _{\overline{D}}, \mb{Z}) \ar[r]^{ \ \ \ \ \ \ \alpha ^{(\ell )}} & \Theta _{\ell } \ar[r] & \big(\Theta _{\ell }\bigr)_{G_k} \ar[r]^{\eta ^{(\ell )}} & A_0(Y)^{(\ell )},
}
\end{align*}
where $\eta ^{(\ell )}$ denotes the following composite map:
\begin{align*}
\big(\Theta _{\ell }\bigr)_{G_k} &\simeq \mathrm{Coker}\bigl( \pi_1^{geo}(D^{(1)})^{pro-\ell }\rightarrow \pi_1^{geo}(Y_0)^{pro-\ell }\bigr)\\
&\simeq \mathrm{Coker}\bigl( A_0(D^{(1)})^{(\ell) }\rightarrow A_0(Y_0)^{(\ell )}\bigr)\\
&\rightarrow \mathrm{Coker}\bigl( A_0(D)^{(\ell )}\rightarrow A_0(Y_0)^{(\ell )}\bigr) \stackrel{\psi ^{(\ell )}}{\longrightarrow }A_0(Y)^{(\ell )}.
\end{align*}
From Lemma \ref{MSAT}(1), the image of $\alpha ^{(\ell) }$ is contained in $\big(\Theta _{\ell }\bigr)_{tors}$. Thus, $G(Y)\{\ell \}$ is a subquotient of $\big(\Theta _{\ell }\bigr)_{tors}$.

(2) It suffices to show that the composite of canonical maps
\begin{align*}
\xymatrix{
\mathrm{Im}\bigl( \alpha ^{(\ell )}\bigr) \ar@{^{(}->}[r]& \big(\Theta _{\ell }\bigr)_{tors} \ar[r]^{f_1}&\bigl(\big(\Theta _{\ell }\bigr)_{tors}\bigr)_{G_k} \ar[r]^{f_2}& \big(\Theta _{\ell }\bigr)_{G_k} \ar[r]^{\eta ^{(\ell) }}& A_0(Y)^{( \ell) }
}
\end{align*}
is injective under the assumptions. Here the first map is injective by (1). From Lemma \ref{MSAT}(2), the composite map $f_2\circ f_1$ is injective. Under the assumption (i), $\eta ^{(\ell)}$ coincides with the map $\psi ^{(\ell )}$ from Lemma \ref{L1}(2), and hence is injective from Lemma \ref{L8}.
\end{proof}
\section{Examples}

We here give two examples of simple normal crossing surfaces over $k$.  One is a surface $Y_1$ for which $H_2(\Gamma _{Y_1\otimes F},\mb{Z})$ is not trivial but $\rho _{Y_1\otimes F}$ is injective for any finite extension $F/k$. Another is a surface $Y_2$ for which $\rho _{Y_2\otimes F}$ is not injective for any finite extension $F/k$. We also see that the map $\rho_{Y_2\otimes F}/n$ is not injective for some positive integer $n$. On the other hand, there is a simple normal crossing surface $Y_3$ for which $\rho _{Y_3}$ is not injective but $\rho _{Y_3\otimes E}$ is injective for any sufficiently large finite extension $E/k$ (cf. \cite{Sat}).
\begin{ex}\label{E2}
Let $Y_0:=\mb{P}^1\times _k\mb{P}^1$ and $D$ be the simple normal crossing divisor on $Y_0$ defining the following polynomial :
\begin{align*}
D \ : \ (x^2-y^2)(z^2-w^2)=0 \subset Y_0.
\end{align*}
Then, for any finite extension $F/k$, we have
\begin{align*}
H_1(\Gamma _{D\otimes F},\mb{Z})\simeq \mb{Z}.
\end{align*}

On the other hand, since $\pi _1^{ab}(\overline{Y_0})=0$, we have $\Theta =0$. Hence, from Theorem \ref{MT}, for the following simple normal crossing surface $Y_1:=\bigl( Y_0\times _kO \bigr) \cup \bigl( Y_0\times _k{\infty }\bigr) \cup \bigl( D\times _k\mb{P}^1\bigr)$, the map $\rho _{Y_1}$ is injective for any finite scalar extension.
\end{ex}

\begin{ex}[cf. \cite{MSA}]\label{E1}
Let $n>1$ be a natural number and $(n,6\cdot ch(k))=1$. We assume $k$ is a finite field containing a primitive $n$-th root of unity $\zeta $. We consider a Fermat surface
\begin{align*}
V  :  T_0^n+T_1^n+T_2^n+T_3^n=0 \ \subset \mb{P}_k^3,
\end{align*}
and a free action on $V$
\begin{align*}
\tau : \bigl( T_0 : T_1 : T_2 : T_3 \bigr) \longmapsto \bigl( T_0 : \zeta T_1 : \zeta ^2T_2 : \zeta ^3T_3 \bigr).
\end{align*}
Then we have a projective smooth surface $Y_0:=V/<\tau>$.

Now we consider $2n$ lines on $V$ : $j=1,\dots ,n-1$
\begin{align*}
L _1& : T_0+T_1=T_2+T_3=0,\\
L _2& : T_0+T_1=T_2+\zeta T_3=0 ,\\
{L _1}^{{\tau }^j}& : T_0+{\zeta }^jT_1=T_2+{\zeta }^jT_3=0 ,\\
{L _2}^{{\tau}^j}& : T_0+{\zeta }^jT_1=T_2+{\zeta }^{j+1}T_3=0 .
\end{align*}
Then the following divisor $L$ on $V$ is a connected simple normal crossing divisor:
\begin{align*}
L = L _1\cup L _2\cup {L _1}^{{\tau }}\cup \dots \cup {L _1}^{{\tau }^{n-1}}\cup {L _2}^{{\tau }^{n-1}}.
\end{align*}
This divisor $L $ is stable under the action of $<\tau >$.

Let $\varphi :V\longrightarrow Y_0$ and $C_i=\varphi _{\ast }(L _i)$ $(i=1,2)$. Since $C_i$ is  isomorphic to $L_i$, $C_i$ is a nonsingular rational curve on $Y_0$ and $D=C_1\cup C_2$ is a simple normal crossing divisor on $Y_0$. Moreover every singular points of $D$ are $k$-rational. 

Since $\overline{V}$ is a hypersurface in $\mb{P}^3_{k^{sep}}$, $\pi _1^{ab}(\overline{V})=0$ (cf. \cite[Lemma 3.5.]{Sat}). Hence we have
\begin{align*}
\pi _1^{ab}(\overline{Y_0})\simeq <\tau >\simeq \mb{Z}/n.
\end{align*}
Since $C_i$is rational curves, $\pi _1^{ab}(\overline{C_i})=0$. Therefore, $G_k$ acts on $\Theta _{tors}$ trivially.

On the other hand, because $\varphi $ induces the completely splitting covering $L \longrightarrow D$, 
\begin{align*}
\alpha : H_1(\Gamma _{\overline{D}},\mb{Z})\longrightarrow \pi _1^{ab}(\overline{Y_0})
\end{align*}
is surjective.

We then put $Y_2 := \bigl( Y_0\times _kO \bigr) \cup \bigl( Y_0\times _k{\infty }\bigr) \cup \bigl( D\times _k\mb{P}^1\bigr)$. From Theorem \ref{MT}, $\mathrm{Ker}(\rho _{Y_2})\simeq \mb{Z}/n$. Thus $\rho _{Y_2}$ is not injective. Moreover, we have $\mathrm{Ker}(\rho _{{Y_2}\otimes F})\simeq \mb{Z}/n$ for any finite extension $F/k$, therefore the map $\rho_{{Y_2}\otimes F}$ is not injective. We also know that the map $\rho _{{Y_2}\otimes F}/n$ is not injective.
\end{ex}
\begin{rmk}
We can construct an example of a higher dimensional variety for which the reciprocity map is not injective for any finite scalar extension as follows; Let $X$ be a projective smooth and geometrically irreducible variety over $k$. We consider the fiber product $Y_2\times _kX$, where $Y_2$ is the surface of Example \ref{E1}. Then $\rho_{Y_2\times _kX}$ is not injective for any finite scalar extension. This follows from the following commutative diagram:
\begin{align*}
\xymatrix{
H_2(\Gamma _{Y_2\times X},\mb{Z}) \ar@2{-}[d] \ar[r]^{\delta _{Y_2\times X}} &CH_0(Y_2\times X) \ar[d] \ar[r]^{\rho_{Y_2\times X}} &\pi_1^{ab}(Y_2\times X) \ar[d]\\
H_2(\Gamma _{Y_2},\mb{Z}) \ar[r]^{\delta _{Y_2}} &CH_0(Y_2) \ar[r]^{\rho_{Y_2}} &\pi_1^{ab}(Y_2).
}
\end{align*}
\end{rmk}
\section*{Acknowledgements}
The author expresses his sincere gratitude to K. Sato for his valuable comments and discussions. 


\end{document}